\begin{document}

\newtheorem{theorem}{Theorem}[section]
\newtheorem{lemma}[theorem]{Lemma}
\newtheorem{corollary}[theorem]{Corollary}
\newtheorem{fact}[theorem]{Fact}
\newtheorem{proposition}[theorem]{Proposition}
\newtheorem{claim}[theorem]{Claim}

\theoremstyle{definition}
\newtheorem{example}[theorem]{Example}
\newtheorem{remark}[theorem]{Remark}
\newtheorem{definition}[theorem]{Definition}
\newtheorem{question}[theorem]{Question}

\def\id{\operatorname{id}}
\def\orb{\operatorname{orbit}}
\def\PP{\mathbb{P}}
\def\cb{\operatorname{Cb}}
\def\tp{\operatorname{tp}}
\def\stp{\operatorname{stp}}
\def\acl{\operatorname{acl}}
\def\acldim{\operatorname{acl-dim}}
\def\dcl{\operatorname{dcl}}
\def\eq{\operatorname{eq}}
\def\th{\operatorname{Th}}
\def\locus{\operatorname{loc}}
\def\aut{\operatorname{Aut}}
\def\gc{\operatorname{GC}}
\def\corr{\operatorname{Corr}}
\def\douady{\operatorname{Douady}}
\def\loc{\operatorname{loc}}

\title{$\aleph_0$-categorical  strongly minimal compact complex manifolds}
\author{Rahim Moosa}
\thanks{Rahim Moosa was partially supported by an NSERC Discovery Grant}
\address{Department of Pure Mathematics\\
University of Waterloo\\
Waterloo\\ Ontario N2L 3G1\\
Canada}
\author{Anand Pillay}
\thanks{Anand Pillay was partially supported by EPSRC grant  EP/F009712/1, a Marie Curie Chair, as well as the 
Humboldt Foundation. He would also like to thank Daniel Huybrechts for some helpful conversations during a visit to 
Bonn in April 2007}
\address{School of Mathematics\\
University of Leeds\\
Leeds LS2 9JT\\
UK}

\date{July 5th, 2010}

\subjclass[2000]{Primary 03C98. Secondary 32J27}

\begin{abstract}
{\em Essential} $\aleph_0$-categoricity; i.e., $\aleph_0$-categoricity in some full countable language, is shown to be a robust notion for strongly minimal compact complex manifolds.
Characterisations of triviality and essential $\aleph_0$-categoricity are given in terms of complex-analytic automorphisms, in the simply connected case, and correspondences in general.
As a consequence it is pointed out that an example of McMullen yields a strongly minimal compact K\"ahler manifold with trivial geometry but which is not $\aleph_{0}$-categorical, giving a counterexample to a conjecture of the second author and Tom Scanlon.
\end{abstract}

\maketitle

\section{Introduction and preliminaries}
This paper is concerned with a model-theoretic study of compact complex manifolds ($ccm$'s) $X$ which have ``little structure", in the sense of there 
being ``few" subvarieties of $X^{n}$ for all $n$.  Among the motivations for writing the current paper is to point out the existence of a strongly 
minimal compact K\"ahler manifold with trivial geometry in the model-theoretic sense, but which is not $\aleph_{0}$-categorical. This 
provides a counterexample to a conjecture of the second author and Tom Scanlon (analogous to a similar conjecture for strongly minimal differential 
algebraic varieties) that appears in~\cite{pillayscanlon2001}.
However, a closer look reveals that the notion of $\aleph_{0}$-categoricity itself is not so clear-cut for $ccm$'s, and so a large part of the current 
paper is dedicated to showing that at least for strongly minimal $ccm$'s, $\aleph_{0}$-categoricity is a robust notion, and in fact can be characterized by the existence of only finitely many {\em correspondences}: proper complex-analytic subsets of $X\times X$ that project onto $X$ in each co-ordinate.

A compact complex manifold $X$ can 
be considered as a first-order structure $\mathcal A(X)$ by adjoining predicates for all (closed) complex-analytic subsets of $X^{n}$ for all $n$. The first-order theory of the corresponding structure is very tractable from the model-theoretic point of view; it has finite Morley rank. There is a considerable (geometric) model-theoretic machinery around first-order theories of finite Morley rank. In so far as compact complex manifolds $X$ are concerned, the relevance of this model-theoretic machinery is inversely proportional to the extent to which $X$ is an algebraic variety. Loosely speaking, the strongly minimal compact complex manifolds, which are exactly the irreducible $ccm$'s with no proper infinite complex-analytic subsets, are the building blocks of arbitrary $ccm$'s.
There is a rudimentary classification of strongly minimal structures $M$ according to the behaviour of algebraic closure in a saturated elementary extension; (a) nonmodular, (b) modular nontrivial, and (c) trivial. When $M = 
\mathcal A(X)$, this essentially corresponds to (a) $X$ is an algebraic curve, (b) $X$ is a nonalgebraic simple complex torus, and (c) $X$ has algebraic and Kummer dimension zero, or equivalently $X$ admits no positive-dimensional  compact complex-analytic family of correspondences. 
The identification of strongly minimal $ccm$'s $X$ of type~(c) would seem to be a central problem in bimeromorphic geometry.
A further distinction within type~(c) is between $\aleph_{0}$-categorical and non $\aleph_{0}$-categorical.
But the notion is problematic. If $M$ is a structure for a {\em countable} language then $M$ (or rather the first-order theory of $M$) is said to be {\em $\aleph_{0}$-categorical} if $\th(M)$ has a unique countable model, equivalently if for each $n$, there are only finitely many $\emptyset$-definable subsets of $M^{n}$.
However the underlying first-order language of the structure $\mathcal A(X)$ is on the face of it {\em uncountable}, as for example each point of $X$ is
named by a predicate.
A possible definition of $\aleph_{0}$-categoricity of $X$ is that there is {\em some} full countable language $\mathcal{L}$ for $X$ (see Definition 1.1 below) such that $(X,\mathcal{L})$ is $\aleph_{0}$-categorical. Now $X$ need not have a full countable language, for example if $X$ is a Hopf surface. On the other hand, if $X$ does have a full countable language (which is the case when $X$ is of K\"ahler type), then as the first author points out in ~\cite{sat}, there is a ``canonical" choice for such a language, the so-called Douady language. We prove (Theorem 3.15) that for $X$ strongly minimal $X$ is $\aleph_{0}$-categorical in some full countable language iff $X$ is $\aleph_{0}$-categorical in the Douady language iff $X$ has only finitely many correspondences. In section 2 we go through the special case when $X$ is simply connected, where the arguments are easier and where the third condition becomes $\aut(X)$ is finite. McMullen's example of a general $K3$ surface $X$ with $\aut(X) = \mathbb{Z}$, provides then a trivial strongly minimal K\"ahler manifold which is not $\aleph_{0}$-categorical.

There are several overviews of the model theory of compact complex manifolds for a general audience, such as ~\cite{amsmoosa} and ~\cite{moosapillay}, which we point the reader towards. An introduction to key notions of model theory, especially in regard to applications, appears in ~\cite{pillaylms} which we again recommend for the non expert.
Douady spaces and full countable languages play an important role in the current paper, and the reader is referred to the first author's paper~\cite{sat}  for a comprehensive treatment.

\smallskip

As strong minimality and triviality are central to the paper we give brief accounts.
First, when we speak of a {\em definable set} in a structure $M$ we mean a set (typically a subset of some $M^{n}$) definable in $M$ possibly with parameters from $M$.
A one-sorted structure $M$ (in a possibly uncountable language $L$) is said to be {\em strongly minimal} if for any elementary extension $M'$ of $M$ every definable subset of $M'$ is finite or cofinite. 
If $M$ is strongly minimal and $M'$ a saturated elementary extension of $M$, algebraic closure yields an infinite-dimensional pregeometry or matroid on $M'$.
The structure $M$ is said to have {\em trivial geometry} if this pregeometry on $M'$ is trivial, in the sense that for any subset $A$ of $M'$, 
$\acl(A)\cap M = \cup_{a\in A}\acl(a)\cap M$.

As mentioned above, if $X$ is a compact complex variety (so reduced and irreducible), by $\mathcal{A}(X)$ we mean the structure which has $X$ as its universe and a predicate for each complex-analytic subset of each finite cartesian power of $X$.
We say that {\em $X$ is strongly minimal} or that {\em $X$ has trivial geometry}, if it is true of the structure $\mathcal A(X)$.
It follows from quantifier elimination that $X$ is strongly minimal just if $X$ has no positive-dimensional proper complex-analytic subsets.
We denote by $\mathcal A$ the many-sorted structure where there is a sort for each compact complex variety and a predicate for each complex-analytic subset of each finite cartesian product of sorts.
We typically work in the many-sorted structure $\mathcal A$. But note that 
by definability of types, a subset of $X^n$ is definable in $\mathcal A(X)$ if and only if it is definable in~$\mathcal A$.

\begin{definition}[cf.~\cite{sat}]
\label{fcl}
Suppose $X$ is a compact complex variety.
A {\em full countable language for $X$} is a countable (one-sorted, relational) language $\mathcal{L}$ and an $\mathcal{L}$-structure on $X$ such that
\begin{itemize}
\item[(1)]
for all $n<\omega$, a subset of $X^n$ is definable in $\mathcal{A}(X)$ if and only if it is definable (with parameters) in $(X,\mathcal{L})$.
\end{itemize}
We will say that $\mathcal{L}$ is {\em analytic} if in addition
\begin{itemize}
\item[(2)]
whenever $\sigma$ is an automorphism of $(X,\mathcal{L})$ and $A\subseteq X^n$ is a complex-analytic subset, then $\sigma(A)$ is complex-analytic.
\end{itemize}
We also say that $X$ is {\em essentially saturated} if it has some full countable language.
\end{definition}

\begin{example}[The Douady Language]
Suppose $X$ is an essentially saturated compact complex variety.
In~\cite{sat} it is shown that for all $n>0$, every irreducible complex-analytic subset of $X^n$ lives in a compact component of the Douady space $D(X^n)$.
(In fact this characterises essential saturation.)
Let us call the components of $D(X^n)$ that arise in this way {\em prime}.
Consider the reduct $\mathcal{A}_X$ of $\mathcal{A}$, where there is  a predicate for the restriction of the universal family $Z(X^n)\to D(X^n)$ to each prime component, as $n>0$ varies.
Then, by quantifier elimination, a subset of a cartesian power of $X$ is definable in $\mathcal A_X$ if and only if it is definable in $\mathcal A(X)$.
Now let $\mathcal{L}_{\douady}$ be the language where there is a predicate for each subset of $X^n$ that is $0$-definable in $\mathcal{A}_X$, for each $n>0$.
By definability of types this is a full countable language for $X$, and every automorphism of $(X,\mathcal{L}_{\douady})$ extends to an automorphism of $\mathcal{A}_X$.
Since automorphisms of $\mathcal A_X$ preserve complex-analyticity, $\mathcal{L}_{\douady}$ is a full countable analytic language for $X$. We call it the {\em Douady language} of $X$.
Note also that $(X,\mathcal{L}_{\douady})$ admits quantifier elimination.
\end{example}

Throughout, by $\acl$ we mean the algebraic closure in ``eq".

\begin{lemma}
\label{qefcl}
If $\mathcal{L}$ is a full countable analytic language for $X$ and $C\subseteq X^n$ is $F$-definable in $(X,\mathcal{L})$, then $C$ is of the form $\displaystyle \bigcup_{i=1}^\ell A_i\setminus B_i$ where each $A_i$ is an irreducible $\acl(F)$-definable complex-analytic subset of $X^n$ and $B_i$ is a proper $\acl(F)$-definable complex-analytic subset of $B_i$.
\end{lemma}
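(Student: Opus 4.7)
The plan is to induct on the dimension of the Zariski closure $Z(C)$ of $C$ in $X^n$ --- the smallest complex-analytic subset of $X^n$ containing $C$ --- which exists because $\mathcal{A}(X)$ has finite Morley rank, so descending chains of complex-analytic sets stabilize. The base case $\dim Z(C) = 0$ is immediate: $Z(C)$ is then a finite set of points, each with finite orbit under $\aut((X,\mathcal{L})/F)$ and thus $\acl(F)$-definable, and $C$ is a union of some of them (with each $B_i = \emptyset$).

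For the inductive step, I would write $Z(C) = A_1 \cup \cdots \cup A_k$ as the union of its irreducible components. Minimality of $Z(C)$ forces $C \cap A_i$ to be Zariski-dense in $A_i$, since otherwise replacing $A_i$ by the Zariski closure of $C \cap A_i$ inside $A_i$ would yield a strictly smaller complex-analytic set still containing $C$. Setting $B_i := Z(A_i \setminus C)$, which is then a proper complex-analytic subset of $A_i$, one has $A_i \setminus B_i \subseteq C$. The residual set $R := C \setminus \bigcup_i (A_i \setminus B_i)$ lies in $\bigcup_i B_i$, so its Zariski closure has strictly smaller dimension, and by induction $R$ decomposes in the desired form; concatenating produces the decomposition of $C$.

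To descend the parameters to $\acl(F)$, I would pass to a sufficiently saturated elementary extension of $(X,\mathcal{L})$ and let $G := \aut((X,\mathcal{L})/F)$. Since $\mathcal{L}$ is analytic, $G$ preserves complex-analyticity; since $C$ is $F$-definable, $C$ is $G$-invariant. Hence $\sigma(Z(C))$ is a complex-analytic set containing $C$ for every $\sigma \in G$, which forces $\sigma(Z(C)) = Z(C)$, so $Z(C)$ is $G$-invariant and thus $F$-definable by fullness of $\mathcal{L}$. The finitely many irreducible components are permuted by $G$, so each $A_i$ has finite orbit and is $\acl(F)$-definable; the same invariance argument relative to $\acl(F)$ shows each $B_i$ is $\acl(F)$-definable, and the induction continues with $\acl(F)$ as parameter set. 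The main obstacle I expect is this descent: it requires working in a saturated model and invoking the standard fact that a definable set with finite orbit under the automorphism group fixing $F$ is $\acl(F)$-definable, and it uses $\mathcal{L}$-analyticity precisely to ensure that Zariski closure is a $G$-equivariant operation.
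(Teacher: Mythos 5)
Your descent mechanism is exactly the paper's: because $\mathcal{L}$ is analytic, any automorphism of $(X,\mathcal{L})$ fixing $F$ permutes the complex-analytic sets containing a given $F$-definable set, so the Zariski closure of such a set is invariant, hence $F$-definable by fullness, and a definable set with finite orbit is $\acl(F)$-definable. Where you differ is in how the decomposition $C=\bigcup_i A_i\setminus B_i$ is produced: the paper quotes quantifier elimination in $\mathcal{A}(X)$ to obtain an irredundant decomposition into locally closed pieces, notes that this decomposition is \emph{unique} up to permutation, concludes that the automorphism group permutes the pieces $C_i=A_i\setminus B_i$ (so each is $\acl(F)$-definable), and only then applies the closure argument to each $C_i$. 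You instead rebuild the decomposition by induction on $\dim Z(C)$. That alternative is viable, but as written it has two gaps.

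First, the assertion that $B_i:=Z(A_i\setminus C)$ is a \emph{proper} analytic subset of $A_i$ does not follow from the Zariski-density of $C\cap A_i$ in $A_i$: for an arbitrary subset of an irreducible analytic set, both it and its complement can be dense. What makes it true is that $C$ is \emph{constructible} (a finite boolean combination of analytic sets), so that a constructible set dense in an irreducible analytic set contains a nonempty Zariski-open subset of it. That constructibility is precisely quantifier elimination for $\mathcal{A}(X)$ --- the very input the paper's proof cites explicitly --- and your induction silently presupposes it; you must invoke it, at which point your induction is essentially re-deriving the locally closed decomposition that QE hands you directly. Second, passing to a saturated elementary extension undercuts your own argument: condition (2) of Definition~\ref{fcl} concerns only automorphisms of the standard structure $(X,\mathcal{L})$, and an automorphism of a proper elementary extension has no reason to carry (interpretations of) complex-analytic sets to complex-analytic sets, so the claim that $G$ preserves analyticity is not licensed there. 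The point of essential saturation is that $(X,\mathcal{L})$ is \emph{itself} saturated, so the invariance arguments (invariant implies $F$-definable, finite orbit implies $\acl(F)$-definable) must be run with automorphisms of the standard model, as the paper does. With those two repairs your proof goes through.
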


\begin{proof}
Let $G$ be the group of automorphism of $(X,\mathcal L)$, in the model-theoretic sense, that fixes $F$ point-wise.
Suppose $D$ is any $F$-definable set.
Since $\mathcal{L}$ is analytic, every member of $G$ will permute the collection of complex-analytic subsets of $X^n$ that contain $D$.
Hence the closure of $D$, $\bar D$, which we know is definable in $(X,\mathcal L)$ by fullness, is fixed set-wise by every automorphism in $G$.
By saturation, it follows that $\bar D$ is $F$-definable in $(X,\mathcal L)$.

Now, by quantifier elimination in $\mathcal{A}(X)$ we can write $C$ irredundantly as $C= \bigcup_{i=1}^\ell C_i$ where $C_i=A_i\setminus B_i$ with $A_i$ irreducible complex-analytic and $B_i$ a proper complex-analytic subset of $A_i$.
Moreover this decomposition is unique up to a permutation of $\{C_1,\dots,C_\ell\}$.
Since $\mathcal L$ is analytic and $C$ is $F$-definable, it follows that every member of $G$ permutes $\{C_1,\dots,C_\ell\}$.
Hence, by saturation, each $C_i$ is $\acl(F)$-definable in $(X,\mathcal L)$.
By the discussion in the first paragraph, applied to $D=C_i$ and the parameter set $\acl(F)$, $A_i=\bar C_i$ is also $\acl(F)$-definable.
Hence, so is $B_i=A_i\setminus C_i$.
\end{proof}

\begin{definition}[Essential $\aleph_0$-categoricity]
A compact complex variety $X$ is 
{\em essentially $\aleph_0$-categorical} if  there exists a full countable language $\mathcal L$ for $X$ such that $(X,\mathcal L)$ is $\aleph_0$-categorical.
\end{definition}

This paper is primarily concerned with essentially $\aleph_0$-categorical strongly minimal manifolds.
If $X$ is such, and $\mathcal L$ is a full countable language witnessing $\aleph_0$-categoricity, then by Zilber's theorem algebraic closure in $(X,\mathcal L)$, and hence also in $\th\big(\mathcal A(X)\big)$, is a modular geometry.
But modular strongly minimal manifolds are characterised in Proposition~5.1 of~\cite{pillayscanlon2003}, they are either of trivial geometry or are simple complex tori.
Since the latter are not essentially $\aleph_0$-categorical (by what we know about $\aleph_0$-categorical groups, for example), we obtain:

\begin{fact}
\label{cat-triv}
Essentially $\aleph_0$-categorical strongly minimal compact complex varieties are necessarily of trivial geometry.
\end{fact}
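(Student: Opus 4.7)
The plan is to combine three ingredients in the order the paragraph preceding the statement already suggests: Zilber's theorem on $\aleph_0$-categorical strongly minimal sets, the Pillay--Scanlon classification of modular strongly minimal $ccm$'s, and the standard fact that infinite complex tori cannot be $\aleph_0$-categorical as groups.

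First, I would fix an essentially $\aleph_0$-categorical strongly minimal $X$ witnessed by a full countable language $\mathcal L$. Because $\mathcal L$ defines exactly the same sets (with parameters) as $\mathcal A(X)$, the pregeometry of algebraic closure on $X$ is the same whether computed in $(X,\mathcal L)$ or in $\mathcal A(X)$. Zilber's theorem on $\aleph_0$-categorical strongly minimal structures then tells us that this pregeometry is locally modular; in particular, after possibly naming a point, the geometry is modular. Since this property of the pregeometry is intrinsic to $X$ as an object of $\mathcal A$, it follows that $X$ is a modular strongly minimal $ccm$.

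Next, I would invoke Proposition~5.1 of \cite{pillayscanlon2003}, which states that a modular strongly minimal compact complex variety is either of trivial geometry or is a simple (nonalgebraic) complex torus. If the former, we are done, so the task reduces to ruling out the torus case.

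Finally, suppose toward a contradiction that $X$ is a simple complex torus. Its group operation, together with inversion and (after naming the identity) the neutral element, has complex-analytic graph and is therefore $\emptyset$-definable in $\mathcal A(X)$ modulo one parameter, hence definable in $(X,\mathcal L,0)$. Essential $\aleph_0$-categoricity of $X$ passes to $(X,\mathcal L,0)$, and then, by taking the reduct to the pure group language, the abelian group $(X,+)$ is itself $\aleph_0$-categorical. But a positive-dimensional complex torus is, as an abstract abelian group, isomorphic to $(\mathbb R/\mathbb Z)^{2g}\oplus (\text{divisible torsion-free part})$ and in particular has unbounded exponent, contradicting the theorem of Baur--Rogers--Szmielew that every $\aleph_0$-categorical abelian group has bounded exponent. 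This rules out the torus case and completes the argument.

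I do not anticipate any serious obstacle; the proof is essentially a bookkeeping exercise stitching together three known results. The only point requiring a moment of care is that the bridge from $(X,\mathcal L)$ to the pure-group structure on the torus introduces one parameter (the origin), but this does not affect $\aleph_0$-categoricity, which is preserved under naming finitely many constants and under taking reducts.
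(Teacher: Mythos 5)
Your proposal is correct and follows essentially the same route as the paper: Zilber's theorem forces (local) modularity, Proposition~5.1 of the Pillay--Scanlon paper reduces to the trivial or simple-torus cases, and the torus case is excluded by the standard fact that $\aleph_0$-categorical abelian groups have bounded exponent (the paper merely gestures at ``what we know about $\aleph_0$-categorical groups,'' so your explicit bounded-exponent argument, with the minor caveat that the attribution is more usually to Rosenstein or to the Szmielew-style classification, just fills in that detail).
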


The following is a useful characterisation of triviality.

\begin{lemma}
\label{trivial-condition}
Suppose $X$ is a strongly minimal compact complex variety. 
The following are equivalent:
\begin{itemize}
\item[(i)]
$X$ has trivial geometry,
\item[(ii)]
there is no infinite definable family of irreducible complex-analytic subsets of $X^2$ projecting onto $X$ in each co-ordinate.
\end{itemize}
\end{lemma}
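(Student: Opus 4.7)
The plan is to prove both implications by contrapositive, combining the pregeometric content of triviality with the correspondence between definable sets of Morley rank $1$ in $X^2$ and irreducible $1$-dimensional complex-analytic subsets provided by quantifier elimination in $\mathcal{A}(X)$.

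For the contrapositive of (ii)~$\Rightarrow$~(i): assume $X$ has nontrivial geometry. Standard manipulations in the pregeometry yield $a, b, c \in X$ pairwise $\acl$-independent with $a \in \acl(b, c)$. The complex-analytic Zariski closure of $(a, b)$ over $c$ is then an irreducible $\acl(c)$-definable complex-analytic subset $A_c \subseteq X^2$ of dimension $1$, projecting onto $X$ in each coordinate since $a$ and $b$ are each generic in $X$ over $c$. It remains to show the definable family $\{A_c\}$ has infinitely many distinct members: otherwise $A_c$ would be $\acl(\emptyset)$-definable for generic $c$, and so would equal the analytic Zariski closure of $(a, b)$ over $\emptyset$, which is $X^2$ by independence of $a$ and $b$ over $\emptyset$, contradicting $\dim A_c = 1$.

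For the contrapositive of (i)~$\Rightarrow$~(ii): assume an infinite definable family $\{A_t\}_{t \in T}$ of irreducible complex-analytic subsets of $X^2$ each projecting onto $X$ in both coordinates. Passing to a positive-dimensional irreducible component of the Douady space of $X^2$, I may assume $T$ is irreducible and $t \mapsto A_t$ is injective. Choosing $t$ generic in $T$ and $(a, b) \in A_t$ generic over $t$, a dimension count gives $\dim A_t = 1$, so $b \in \acl(a, t) \setminus \acl(t)$. A short subargument also gives $b \notin \acl(a)$: otherwise the $\emptyset$-definable analytic Zariski closure of $(a, b)$ would be an irreducible $1$-dimensional complex-analytic set containing $A_t$, hence equal to it, making $A_t$ the same $\emptyset$-definable set for generic $t$ and contradicting the injective parameterization of an infinite family.

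The main obstacle is then to convert the relation $b \in \acl(a, t) \setminus \bigl(\acl(a) \cup \acl(t)\bigr)$, with $t$ imaginary, into a witness of nontriviality using only parameters from $X$, as required by the definition of trivial geometry. I plan to handle this by taking further independent generic points $(a_2, b_2), \ldots, (a_n, b_n) \in A_t$ over $\{t, a, b\}$, continuing until $t \in \acl(a, b, a_2, b_2, \ldots, a_n, b_n)$; such an $n$ exists because the injective Douady parameterization becomes generically finite-to-one once enough generic points on $A_t$ are specified. Substituting gives $b \in \acl(a, a_2, b_2, \ldots, a_n, b_n)$, while the successive genericity of the choices ensures $b \notin \acl(x)$ for any individual parameter $x$ in this tuple, yielding the required witness to nontriviality on $X$.
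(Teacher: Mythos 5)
Your overall strategy for the direction ``nontrivial $\Rightarrow$ infinite family'' coincides with the paper's: take the locus of the dependent pair $(a,b)$ over the witnessing parameter, and note that if the family were finite its members would be $\acl(\emptyset)$-definable, forcing the locus to be $X^2$ and contradicting $a\notin\acl(b)$. One caveat: your opening reduction to three \emph{singletons} $a,b,c$, pairwise independent with $a\in\acl(b,c)$, is more than you need and is not a formal consequence of non-degeneracy of a pregeometry (four points in general position in a rank-three matroid are pairwise ``trivial'' yet the matroid is non-degenerate). Nothing is lost, and no such claim is needed, if you let $c$ be a finite tuple from $X$: take $x\in\acl(B)\setminus\bigcup_{y\in B}\acl(y)$ with $B$ finite of minimal size and split off one coordinate. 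That is exactly the form the paper works with, and your locus construction goes through verbatim for a tuple $c$.

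For the converse direction your route genuinely diverges from the paper's, and this is where the gap sits. By reparametrising through the Douady space you make the parameter $t$ an imaginary (note also that for a general strongly minimal $X$, not assumed essentially saturated, the relevant component of $D(X^2)$ need not be compact, so it is safer to use the definable quotient $W/E$ of the original parameter set by the relation $Z_w=Z_{w'}$). You must then descend to real parameters, and the step ``continuing until $t\in\acl(a,b,a_2,b_2,\dots,a_n,b_n)$; substituting gives $b\in\acl(a,a_2,b_2,\dots,a_n,b_n)$'' is circular as written: $b$ occurs in the set from which you recover $t$, so the substitution only yields $b\in\acl(a,b,a_2,\dots,b_n)$, which is vacuous. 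The repair is standard but must be stated: $t$ is interalgebraic with the canonical base of the generic type of $A_t$ (here injectivity of $t\mapsto A_t$ is used), and that canonical base lies in $\acl\big((a_2,b_2),\dots,(a_n,b_n)\big)$ for a sufficiently long Morley sequence chosen independent from $(a,b)$ over $t$; then $b\in\acl(a,a_2,b_2,\dots,a_n,b_n)$ while $b\notin\acl(x)$ for each single coordinate $x$ by that independence. The paper avoids this entire detour: it keeps the parameter $c$ as a tuple in $X^k$ throughout, so the witness to nontriviality is already over elements of $X$, and it proves $b\notin\acl(a)$ not via an injective parametrisation but by taking a second realisation $d\models\tp(c)$ independent from $\{a,c\}$, observing that $Z_c\cap Z_d$ is finite, and using stationarity of $\tp(c/ab)$ to reach a contradiction. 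That argument is shorter and needs no canonical bases, so you may want to adopt it; alternatively your version stands once the Morley-sequence step is corrected.
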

\begin{proof} This is well-known and comes easily out of the definitions, but we give a sketch of the proof anyway, using freely model-theoretic language.
Let us first assume that there is some infinite definable family of irreducible complex-analytic subsets of $X^{2}$ projecting onto $X$ in each coordinate.
Namely there is some definable subset $W$ of $X^{k}$ (some $k$), and some definable subset $Z$ of $W\times X^{2}$ such that for each $w\in W$ the fibre  $Z_w\subset X^{2}$ is an irreducible complex-analytic set that projects onto $X$ in each coordinate, and $\{Z_w:w\in W\}$ is infinite.
As we may exclude $X^{2}$ itself from being among the fibres, by strong minimality each $Z_w$ is generically finite-to-one over $X$ in each co-ordinate.
In particular each $Z_w$ is itself a strongly minimal compact complex variety.

Now we pass to a saturated elementary extension $\mathcal{A}'$ of $\mathcal{A}$. Let $(c,a,b)\in Z(\mathcal A')$ be a generic point of $Z$ in $\mathcal A'$.
As $Z(\mathcal A')_c$ is generically finite-to-one over $X(\mathcal A')$ in each co-ordinate, $a$ and $b$ are interalgebraic over $c$.
On the other hand, as $\acldim(Z)>\acldim(W)$, $a,b\notin\acl(c)$.

We claim that $b\notin\acl(a)$.
Indeed, let $d\models\tp(c)$ be indepenendent over $\{a,c\}$.
As $\{Z_w:w\in W\}$ is infinite, independence from $c$ implies that $Z(\mathcal A')_c\neq Z(\mathcal A')_d$ and so $Z(\mathcal A')_c\cap Z(\mathcal A')_d$ is a finite set.
As $a\notin\acl(c,d)$, it follows that $(a,b)\notin Z(\mathcal A')_c\cap Z(\mathcal A')_d$.
But if $b$ were contained in $\acl(a)$ then $c$ and $d$ would be independent over $\{a,b\}$, and stationarity would imply that $\tp(c/ab)=\tp(d/ab)$, which would in turn imply the contradiction $(a,b)\in Z(\mathcal A')_c\cap Z(\mathcal A')_d$.

So
$b\in\acl(c,a)\setminus\big(\acl(c)\cup\acl(a)\big)$, showing nontriviality of $X$. 

Conversely assume $X$ is nontrivial.
Let $a,b\in X(\mathcal A')$ and $c\in X(\mathcal A')^k$ be such that $b\in\acl(c,a)\setminus\big(\acl(c)\cup\acl(a)\big)$.
Extending $c$ we may further assume that $\tp(ab/c)$ is stationary.
Let $Z\subset X^k\times X^2$ be the locus of $(c,a,b)$.
Then for some definable $W\subseteq X^n$ with $c\in W(\mathcal A')$, each $Z_w\subseteq X^2$ is an irreducible complex-analytic subset that projects onto $X$ in each co-ordinate.
Note that if $E$ is the definable equivalence relation on $W$ where $wEw'$ if and only if $Z_w=Z_w'$, then $Z(\mathcal A')_c$ is defined over $c/E$ and so $b\in\acl(c/E,a)$.
Since $b\notin\acl(a)$, it follows that $\{Z_w:w\in W\}$ must be infinite.
\end{proof}

\section{A warm-up: the simply connected case} 
\label{sc-section}

\begin{lemma}
\label{scsm}
Suppose $X$ is a simply connected strongly minimal compact complex manifold of dimension greater than one.
\begin{itemize}
\item[(a)]
The only irreducible complex-analytic subsets of $X^2$ are points, $X^2$ itself, vertical and horizontal ``slices'' $\{a\}\times X$ and $X\times \{a\}$ where $a\in X$, and graphs of automorphisms.
\item[(b)]
Suppose moreover that $X$ has trivial geometry, and $A\subseteq X^n$ is an irreducible complex-analytic subset.
Then, after some permutation of the co-ordinates, there exists $0\leq r\leq n$ such that $A$ is defined by equations $\phi_1,\dots,\phi_{n-r}$ where $\phi_j$ is either of the form `$x_{r+j}=\sigma(x_i)$' for some $\sigma\in \aut X$ and $i\leq r$, or of the form `$x_{r+j}=b$' for some $b\in X$.
\end{itemize}
\end{lemma}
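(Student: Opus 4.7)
For part (a), I would start by letting $A\subsetneq X^2$ be an irreducible complex-analytic subset that is not a point, not $X^2$, and not a horizontal or vertical slice. Strong minimality of $X$ together with irreducibility of $A$ forces both projections $\pi_i\colon A\to X$ to be surjective (otherwise $A$ would be a slice) and to have generically finite fibres (otherwise $A=X^2$). The first step is to upgrade generic finiteness to honest finiteness: a positive-dimensional fibre $\pi_1^{-1}(x)=\{x\}\times F_x$ would force $F_x=X$ by strong minimality, and then $A=\{x\}\times X$ by equality of dimensions and irreducibility, contradicting our assumption. Hence both projections are finite morphisms.

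The core geometric step is then to pass to the analytic normalisation $\nu\colon\tilde A\to A$, giving a finite surjection $\pi_1\circ\nu\colon\tilde A\to X$ from a connected (normal, irreducible) compact complex space to the smooth simply connected $X$. By purity of the branch locus, this branch locus is pure of codimension one in $X$ whenever nonempty; since $\dim X>1$, it would then be a positive-dimensional proper analytic subset of $X$, contradicting strong minimality. Hence $\pi_1\circ\nu$ is a connected finite étale cover, and simple connectedness forces it to be an isomorphism. Setting $g:=\pi_2\circ\nu\circ(\pi_1\circ\nu)^{-1}$ gives a holomorphic self-map of $X$ whose graph is $A$, and the symmetric argument applied to $\pi_2$ produces a holomorphic inverse, so $g\in\aut(X)$.

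For part (b), I would pass to a saturated elementary extension and take a generic point $\bar a=(a_1,\ldots,a_n)$ of $A$ over $\emptyset$, which is available since every analytic subset of $X^n$, in particular $A$, is $\emptyset$-definable in $\mathcal{A}(X)$. Each $a_i$ is either a named point of $X$ (if algebraic over $\emptyset$) or a generic point of $X$. Triviality of the geometry means pairwise interalgebraicity partitions the non-constant coordinates into classes that behave pointwise; I would choose a representative from each class and permute so that these become $a_1,\ldots,a_r$, yielding mutually independent generic representatives. Every later coordinate $a_j$ ($j>r$) is then either a specific point $b_j\in X$ (giving $x_j=b_j$), or interalgebraic with a unique $a_i$ ($i\le r$): in the latter case, the locus of $(a_i,a_j)$ is an irreducible analytic subset of $X^2$ projecting onto $X$ in each coordinate, so by part (a) it is the graph of some $\sigma_{ij}\in\aut(X)$, giving $x_j=\sigma_{ij}(x_i)$. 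The conjunction of these equations cuts out the locus of $\bar a$, which by genericity and irreducibility equals $A$.

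The step I expect to be the main obstacle is the passage from the finite surjection $\pi_1\circ\nu\colon\tilde A\to X$ to an isomorphism. This combines the analytic form of purity of the branch locus (where the hypothesis $\dim X>1$ is essential, so that codimension one means positive-dimensional, and strong minimality can be invoked) with the classification of connected finite étale covers of a simply connected space. Handling the possible singularities of $A$ via normalisation rather than assuming $A$ smooth is what makes the argument go through uniformly.
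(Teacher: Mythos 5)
Your proof is correct and follows essentially the same route as the paper: part (a) via the normalisation, purity of the branch locus, strong minimality, and simple connectedness, and part (b) via a generic point in a saturated extension, triviality, and part (a). The only cosmetic difference is that the paper organises part (b) as an induction on $n$, peeling off one dependent coordinate at a time, whereas you extract the whole $\acl$-basis at once and observe that the resulting system of equations cuts out an irreducible set of the same dimension as $A$ containing it.
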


\begin{proof}
To prove part~(a) we first recall the following fact:

\begin{fact}
\label{norm}
Suppose $X$ is a strongly minimal compact complex manifold of dimension greater than one, $A\subseteq X^n$ is an irreducible complex-analytic subset such that one of the co-ordinate projections $\pi:A\to X$ is surjective and finite-to-one.
If $\rho:A'\to A$ is a normalisation of $A$, then $\pi\circ\rho:A'\to X$ is an unramified covering.
\end{fact}

\noindent
Indeed, arguments for this fact can be found in Lemma~7 of~\cite{pillay-torus}, Proposition~2.12 of~\cite{pillayscanlon2001}, and Lemma~4.2 of~\cite{moosamorarutoma}.
Here is a sketch:
The branch locus of $\pi\circ\rho:A'\to X$, where the morphism is not locally a biholomorphism, is a proper complex-analytic subset of $A'$.
Since $\pi\circ\rho$ is finite-to-one and $A'$ is irreducible, the image of the branch locus cannot be all of $X$, and so by strong minimality it must be a finite subset of $X$.
Hence the branch locus itself is finite.
But the smoothness of $X$ and the normality of $A'$ imply that the branch locus is either of codimension one or empty (by the purity of branch theorem).
It follows that $\pi\circ\rho$ is everywhere locally biholomorphic, as desired.

Now, if $A\subseteq X^2$ is an irreducible complex-analytic set that is neither all of $X^2$ nor a ``slice'', then by strong minimality each co-ordinate projection $\pi:A\to X$ is finite-to-one onto $X$.
Applying Fact~\ref{norm} to this situation, and remembering that $X$ is simply connected, we get that $\pi\circ\rho$, and hence $\pi:A\to X$ itself, is a biholomorphism.
This proves part~(a).

We prove part~(b) by induction on $n$.
The case of $n=1$ is by strong minimality, and the case of $n=2$ is part~(a).
For the induction step, suppose $n>2$ and let $a=(a_1,\dots,a_n)\in X(\mathcal{A}')$ be a generic point of $A$ in a sufficiently saturated elementary extension $\mathcal A'$ of $\mathcal A$.
After permuting co-ordinates we may assume that $\{a_1,\dots,a_r\}$ is an $\acl$-basis for $\{a_1,\dots,a_n\}$.
If $r=n$ then $A=X^n$ and we are done.
If $r=0$ then $A$ is a point and we are done.
Suppose $0<r<n$ and consider $a_{r+1}\in\acl(a_1,\dots,a_r)$.
By triviality, $a_{r+1}\in\acl(a_i)$ for some $i\leq r$.
Let $S=\locus(a_i,a_{r+1})\subseteq X^2$.
Then $S\to X$, under the first co-ordinate projection is a generically finite-to-one map.
By part~(a), $S$ must either be of the form $X\times\{b\}$ or $S$ is a point or $S$ is the graph of some $\sigma\in\aut X$.
In both of the first two cases $a_{r+1}=b$ is a standard point in $X$.
But as $a$ was generic in $A$ and $A$ was irreducible, this means that, after a co-ordinate permutation, $A=B\times\{b\}$ for some $B\subseteq X^{n-1}$ irreducible and complex-analytic.
The desired description of $A$ then follows by applying the induction hypothesis to $B$.
So we may assume that $S$ is the graph of some $\sigma\in \aut X$.
Again by genericity of $a$ and irreducibility of $A$, the $(r+1)$st co-ordinate of every element of $A$ is obtained by applying $\sigma$ to the $i$th co-ordinate.
That is, after a co-ordinate permutation, we get $A\subseteq f(B)$ where $B\subseteq X^{n-1}$ is the projection of $A$ to the first $n-1$ co-ordinates and $f(x_1,\dots,x_{n-1}):=(x_1,\dots,x_{n-1},\sigma(x_i))$.
As $f(B)$ is irreducible and $\dim A\geq\dim B=\dim f(B)\geq\dim A$, we get that $A=f(B)$.
The desired description of $A$ then follows from the description of $B$ given by the induction hypothesis.
\end{proof}

Lemma~\ref{scsm}(b) implies in particular that if $X$ is simply connected strongly minimal trivial, then any irreducible complex-analytic set $A\subseteq X^n$, for $n\geq 2$, is completely determined by its co-ordinate projections to $X^2$.

\begin{proposition}
\label{scsm-ctbleaut}
Suppose $X$ is a simply connected strongly minimal compact complex manifold.
Then the following are equivalent:
\begin{itemize}
\item[(i)]
$\aut X$ is countable.
\item[(ii)]
$X$ is essentially saturated and has trivial geometry.
\end{itemize}
Moreover, in this case, the language $\mathcal L_{\aut}$ consisting of a predicate symbol for the graph of each automorphism of $X$ is a full countable analytic language for $X$.
\end{proposition}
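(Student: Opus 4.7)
The plan is to reduce first to the case $\dim X > 1$. The only simply connected strongly minimal compact complex manifold of dimension one is $\PP^1$, which has uncountable automorphism group and is nontrivial (being an algebraic curve); so both (i) and (ii) fail there and the equivalence holds trivially. Under $\dim X > 1$ I intend to invoke Lemma~\ref{scsm} throughout.

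For (i) $\Rightarrow$ (ii), assuming $\aut X$ is countable, I would first establish triviality by contraposition via Lemma~\ref{trivial-condition}. Nontriviality would yield an infinite definable family $\{Z_w : w \in W\}$ of irreducible complex-analytic subsets of $X^2$ projecting onto $X$ in each coordinate; by Lemma~\ref{scsm}(a) each $Z_w$ must be the graph of an element of $\aut X$ (the alternatives $X^2$ and the slices being excluded by the surjectivity and strong-minimality conditions). In the $\omega$-stable CCM setting an infinite definable family has uncountably many distinct fibres modulo the definable equivalence ``same fibre'', hence uncountably many distinct automorphisms, contradicting (i). With triviality in hand I take $\mathcal{L}_{\aut}$ to be the announced language; it is countable by hypothesis. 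I would verify fullness via quantifier elimination in $\mathcal{A}(X)$ together with Lemma~\ref{scsm}(b): every irreducible analytic subset of $X^n$ is cut out by equations of the shapes $x_{r+j} = \sigma(x_i)$ (atomic in $\mathcal{L}_{\aut}$) and $x_{r+j} = b$ (atomic with parameter $b$), while every predicate of $\mathcal{L}_{\aut}$ is by definition complex-analytic. For the analyticity clause, any $\tau \in \aut(X,\mathcal{L}_{\aut})$ must fix each named graph set-wise, which forces $\tau \circ \sigma = \sigma \circ \tau$ for every $\sigma \in \aut X$; applying $\tau$ coordinatewise to a set cut out as in Lemma~\ref{scsm}(b) then produces the set cut out by the same equations with each constant $b$ replaced by $\tau(b)$, which is again complex-analytic. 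This simultaneously establishes essential saturation and the moreover clause.

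For (ii) $\Rightarrow$ (i), given essential saturation and triviality, I would argue that for each $\sigma \in \aut X$ the graph $\Gamma_\sigma$ sits in a compact prime component $C_\sigma$ of the Douady space $D(X^2)$, and that $C_\sigma$ is forced to be a singleton. Indeed, surjectivity of a coordinate projection in an analytic family is an open condition, so some neighbourhood of $[\Gamma_\sigma]$ in $C_\sigma$ parametrizes fibres all of which project onto $X$ in each coordinate; by Lemma~\ref{scsm}(a) these fibres are graphs of automorphisms, and a positive-dimensional such neighbourhood would give an infinite definable family of correspondences, contradicting triviality via Lemma~\ref{trivial-condition}. Hence by irreducibility $C_\sigma$ is a point. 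Distinct $\sigma \in \aut X$ yield distinct singleton prime components, and since essential saturation forces countably many prime components of $D(X^2)$ (inherent in the countability of $\mathcal{L}_{\douady}$, cf.~\cite{sat}), we conclude $|\aut X| \leq \aleph_0$.

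The main obstacle I anticipate is verifying analyticity of $\mathcal{L}_{\aut}$: that a set-theoretic bijection preserving the named graphs transports every complex-analytic subset of each $X^n$ to a complex-analytic subset. The resolution is the commutation observation $\tau \sigma = \sigma \tau$ for $\tau \in \aut(X,\mathcal{L}_{\aut})$ and $\sigma \in \aut X$, combined with the explicit description of irreducible analytic subsets afforded by Lemma~\ref{scsm}(b).
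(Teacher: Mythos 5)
Your direction (i)$\Rightarrow$(ii) is essentially the paper's argument: triviality via Lemma~\ref{trivial-condition}(ii) and Lemma~\ref{scsm}(a) (an infinite definable family of graphs would force uncountably many automorphisms), and fullness via Lemma~\ref{scsm}(b). Your verification of the analyticity clause --- that any $\tau\in\aut(X,\mathcal L_{\aut})$ commutes with every $\sigma\in\aut X$ and therefore carries each set of the form given by Lemma~\ref{scsm}(b) to one of the same form with $b$ replaced by $\tau(b)$ --- is a detail the paper leaves implicit, and it is correct. The reduction to $\dim X>1$ via $\PP^1$ is also a reasonable piece of bookkeeping, since Lemma~\ref{scsm} is only stated in dimension greater than one.

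Your converse, however, departs from the paper and has a genuine gap. The paper's (ii)$\Rightarrow$(i) never touches the Douady space: if $\aut X$ were uncountable, then a countable full language would, by pigeonhole on formulas, produce a single formula whose instances include uncountably many graphs of automorphisms, i.e.\ an infinite definable family of correspondences, contradicting triviality via Lemma~\ref{trivial-condition}. Your route instead tries to show each component $C_\sigma$ of $D(X^2)$ containing a graph is a singleton, and the step you rest this on --- ``surjectivity of a coordinate projection in an analytic family is an open condition'' --- is false as stated. For a family $Z\to C$ of $\dim(X)$-dimensional cycles in $X^2$, the locus $\{c: Z_c \text{ projects onto } X\}$ is the locus where the fibre of the image has dimension $\geq \dim X$, which by upper semicontinuity of fibre dimension is Zariski \emph{closed}, not open; a priori $[\Gamma_\sigma]$ could be an isolated point of that closed locus inside a positive-dimensional $C_\sigma$ whose other members degenerate to unions of slices. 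Ruling this out requires an actual input --- constancy of the cycle class in a Douady component, or the fact (Lemma~2.3 of~\cite{moosamorarutoma}, which this paper invokes for precisely this purpose in the general-case Proposition~\ref{sm-ctblecorr}) that projecting onto a singleton in some coordinate is preserved across a component. So either supply that input, or replace the converse by the paper's shorter pigeonhole argument, which avoids the Douady space entirely.
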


\begin{proof}
Suppose $\aut X$ is countable.
Since by Lemma~\ref{scsm}~(a) the only irreducible complex-analytic subsets of $X^2$ that project onto 
each co-ordinate are graphs of automorphisms, there can be no infinite definable family of such.
Hence by condition~(ii) of Lemma~\ref{trivial-condition} $X$ must have trivial geometry.
It follows immediately from Lemma~\ref{scsm}~(b) now that the language of automorphisms of $X$ is a full countable analytic language for $X$.
In particular, $X$ is essentially saturated.

For the converse, suppose $\aut X$ is uncountable and $X$ is essentially saturated.
Then, by the existence of a full countable language, there must exist an infinite definable family of automorphisms of $X$, which contradicts triviality.
\end{proof}

\begin{proposition}
\label{aclfix}
Suppose $X$ is a simply connected strongly minimal compact complex manifold with $\aut X$ countable.
\begin{itemize}
\item[(a)]
$(X,\mathcal{L}_{\douady})=(X,\mathcal{L}_{\aut})$ in the sense that every basic relation of one is $0$-definable in the other, and vice versa.
\item[(b)]
In $(X,\mathcal{L}_{\aut})$, and hence also in $(X,\mathcal L_{\douady})$ by part~(a),
$$\acl(\emptyset)\cap X=\bigcup_{\id\neq\sigma\in\aut X}\operatorname{Fix}(\sigma).$$
\end{itemize}
\end{proposition}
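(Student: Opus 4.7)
The plan is to prove~(a) by showing that the model-theoretic automorphism groups of $(X,\mathcal{L}_{\aut})$ and $(X,\mathcal{L}_{\douady})$ coincide, from which the equality of $0$-definable subsets follows via Lemma~\ref{qefcl}. The crux is to establish that each graph $S_\sigma$ is already $0$-definable in $\mathcal{L}_{\douady}$. Let $D_\sigma$ be the prime component of $D(X^2)$ containing $[S_\sigma]$ and consider the definable open locus $U\subseteq D_\sigma$ on which both coordinate projections of the fiber $Z_w\subseteq X^2$ are surjective and finite-to-one. On $U$ each fiber is an irreducible correspondence, hence the graph of an automorphism by Lemma~\ref{scsm}(a); and $U$ is nonempty because $[S_\sigma]\in U$. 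If $D_\sigma$ were positive-dimensional then $U$ would be infinite, yielding an infinite definable family of correspondences that contradicts the triviality of $X$ (Proposition~\ref{scsm-ctbleaut}) via Lemma~\ref{trivial-condition}. Hence $D_\sigma$ is a zero-dimensional irreducible complex space, so $D_\sigma=\{[S_\sigma]\}$, a $0$-definable element of $\mathcal{A}_X$; the fiber $S_\sigma$ of the universal family at this point is therefore $0$-definable in $\mathcal{A}_X$, and in $\mathcal{L}_{\douady}$. In particular $\aut(X,\mathcal{L}_{\douady})\subseteq\aut(X,\mathcal{L}_{\aut})$.

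For the reverse containment I would take $\phi\in\aut(X,\mathcal{L}_{\aut})$, so $\phi$ commutes with every $\sigma\in\aut X$. By Lemma~\ref{scsm}(b) every irreducible complex-analytic subset of $X^n$ is cut out by equations of the form $x_j=\sigma(x_i)$ (preserved by $\phi$ since $\phi\sigma=\sigma\phi$) together with pinnings $x_j=b$ (which $\phi^{\times n}$ carries to $x_j=\phi(b)$). Thus $\phi^{\times n}$ preserves the class of complex-analytic subvarieties, and the induced action on $D(X^n)$ stabilises each prime component setwise (because the combinatorial shape of a subvariety is unchanged under $\phi$). Assembling these gives an automorphism of $\mathcal{A}_X$ extending $\phi$, so $\phi\in\aut(X,\mathcal{L}_{\douady})$. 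With the two automorphism groups identified, the notion of $\acl(\emptyset)$-definable irreducible complex-analytic subset coincides in the two languages, and Lemma~\ref{qefcl} then delivers the equality of the full families of $0$-definable subsets of each $X^n$, proving~(a).

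For part~(b) I would work directly in $(X,\mathcal{L}_{\aut})$. The inclusion $\supseteq$ is immediate: for $\sigma\neq\id$ the set $\operatorname{Fix}(\sigma)=\{x:S_\sigma(x,x)\}$ is $0$-definable and is a proper complex-analytic subset of $X$, hence finite by strong minimality. For $\subseteq$, the key observation is that every $0$-definable subset of $X$ in $\mathcal{L}_{\aut}$ is a Boolean combination of the sets $\operatorname{Fix}(\rho)$, $\rho\in\aut X$: since each $S_\sigma$ is the graph of a total function, an existential formula $\exists\bar y\,\psi(x,\bar y)$ with $\psi$ quantifier-free reduces, after using each $S_\sigma$-atom to express the corresponding $y_i$ as the value of some $\tau\in\aut X$ applied to $x$, to a Boolean combination of constraints $\tau(x)=\tau'(x)$, that is, $x\in\operatorname{Fix}({\tau'}^{-1}\tau)$. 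Any finite $0$-definable subset of $X$ is therefore contained in a finite union of sets $\operatorname{Fix}(\rho_i)$ with each $\rho_i\neq\id$, so any $a\in\acl(\emptyset)\cap X$ lies in $\operatorname{Fix}(\rho)$ for some $\rho\neq\id$. The main obstacle I anticipate is the isolation step for $[S_\sigma]$: one must verify carefully that the correspondence locus $U\subseteq D_\sigma$ is definable and, when $D_\sigma$ is positive-dimensional, infinite, so that Lemma~\ref{trivial-condition} can be invoked to force the contradiction.
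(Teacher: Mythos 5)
Your part~(a) is essentially correct and is, at bottom, the paper's own argument repackaged through automorphism groups. The forward direction (each $[S_\sigma]$ lies in a zero-dimensional prime component of $D(X^2)$, because a positive-dimensional component would give an infinite definable family of correspondences contradicting triviality via Lemma~\ref{trivial-condition}) is exactly how the paper argues, both here and in the general case (Proposition~\ref{sm-ctblecorr}); the definability of your locus $U$, which you rightly flag, is handled there by passing to the complement of a proper analytic subset of the component over which the fibres are irreducible, $\dim(X)$-dimensional and distinct. Your reverse direction, in checking that an automorphism commuting with $\aut X$ stabilises every prime component of $D(X^n)$, is implicitly reproving Claim~\ref{douady-aut} (each prime component is some $X^m$ with universal family cut out by equations $\sigma(x_i)=x_j$), which is what the paper makes explicit. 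One small correction: the passage from ``same automorphism groups'' to ``same $0$-definable sets'' rests on the saturation of $(X,\mathcal{L}_{\aut})$ and $(X,\mathcal{L}_{\douady})$ (available because both are full countable languages), not on Lemma~\ref{qefcl}.

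For part~(b) you take a genuinely different, syntactic route, and here there is a gap. Your reduction of $\exists\bar y\,\psi(x,\bar y)$ to a Boolean combination of conditions $\tau(x)=\tau'(x)$ only works for quantified variables linked to $x$ by a chain of \emph{positive} $S_\sigma$-atoms. A variable $y_i$ occurring only in negated atoms, or constrained by an atom $S_\tau(y_i,y_i)$ without being pinned to $x$, is not of the form $\tau(x)$; eliminating it forces the witness to range over the finite set $\operatorname{Fix}(\tau)$ and so introduces constants from $F:=\bigcup_{\id\neq\sigma}\operatorname{Fix}(\sigma)$ into the quantifier-free description, which then contains atoms $x=c$ with $c\in F$ and not merely the sets $\operatorname{Fix}(\rho)$; nested quantifiers require iterating this elimination. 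The literal claim that every $0$-definable subset of $X$ is a Boolean combination of the $\operatorname{Fix}(\rho)$ is therefore not justified (and is false in general), although your final conclusion survives, since the only new finite atomic sets are singletons drawn from $F$, so a finite $0$-definable set is still contained in $F$. The paper sidesteps this bookkeeping entirely: it notes that $\aut X$ preserves $F$ setwise and acts freely on $X\setminus F$, and uses the resulting regular action on orbits to produce, for any $a,b\in X\setminus F$, an automorphism of the structure $(X,\mathcal{L}_{\aut})$ fixing $F$ pointwise and sending $a$ to $b$; hence all points of $X\setminus F$ are conjugate, and since $X\setminus F$ is infinite none of them is algebraic. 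That homogeneity argument is also the one that generalises to the non--simply-connected setting (Proposition~\ref{aclx0}), so it is worth internalising.
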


\begin{proof}
Since $\aut X$ is discrete, the graph of each automorphism is {\em isolated in $X^2$} in the sense that it lives in a zero-dimensional prime component of $D(X^2)$.
Hence the graph of each automorphism is a basic relation of $(X,\mathcal{L}_{\douady})$.
In order to prove part~(a) it therefore suffices to show that every basic relation in $(X,\mathcal L_{\douady})$ is $0$-definable in $(X,\mathcal{L}_{\aut})$.
To that end, fix $n>0$ and consider $A\subset X^n$ an irreducible complex-analytic subset.
We know by essential saturation that $A$ lives in a compact prime component of $D(X^n)$.
We wish to describe this component.

\begin{claim}
\label{douady-aut}
The prime component $C$ of $D(X^n)$ in which $A$ lives is of the form $C=X^m$ for some $0\leq m\leq n$, and the universal family restricted to $C$, $Z:=Z(X^n)|_C\subseteq C\times X^n$ is defined by equations of the form $\sigma(x_i)=x_j$ where $\sigma\in\aut X$.
\end{claim}

\begin{proof}[Proof of Claim~\ref{douady-aut}]
We prove this by induction on $n$.
For $n=1$, $A$ is either $X$ itself in which case $C=X^0$ and $Z=X$, or $A$ is a point in which case $C=X$ and $Z$ is the diagonal in $X^2$.

Suppose $n>1$.
Assume moreover that there exists a co-ordinate projection $\pi:X^n\to X$ such that $\pi(A)$ is a point.
Then after a possible permutation of co-ordinates we have that $A=A'\times\{a\}$ for some irreducible complex-analytic $A'\subseteq X^{n-1}$ and $a\in X$.
Now $A'$ lives in some prime component $C'$ of $D(X^{n-1})$ with $Z'\subseteq C'\times X^{n-1}$ the restriction of the universal family to $C'$.
As $A=A'\times\{a\}$ the family $Z\to C$ is obtained from $Z'\to C'$ by base change with respect to $C'\times X\to C'$.
That is,
$C=C'\times X$ and
$$Z=\{(c,x_0,x_0,x_1,\dots,x_{n-1}):c\in C', x_0\in X,(c,x_1,\dots,x_{n-1})\in Z'\}.$$
Applying the induction hypothesis to $C'$ and $Z'$ we see that the claim is true of $C$ and $Z$ also.

We may therefore assume that no co-ordinate projection of $X$ is a point.
By Lemma~\ref{scsm}(a), for all co-ordinate projections $\pi:X^n\to X^2$, $\pi(A)$ is the graph of an automorphism.
One consequence of this is that $A$ itself is defined by equations of the form $\sigma(x_i)=x_j$ where $\sigma\in\aut X$; indeed,  looking at the defining formulas for $A$ given in Lemma~\ref{scsm}(b) we see that these are the only possibilities.
On the other hand, by the discreteness of $\aut X$, we also get that each $\pi(A)$ lives in a zero-dimensional prime component of $D(X^2)$.
It follows by triviality that $A$ lives in a zero-dimensional prime component of $D(X^n)$ -- this is exactly Proposition~3.4 of~\cite{moosamorarutoma}.
Hence $C=X^0$ and $Z=A$.
\end{proof}

It follows from the claim that $\mathcal A_X$, the reduct of $\mathcal A$ where only the components of the universal families $Z(X^n)$ are named, is in fact one-sorted (the sort being $X$ itself) and that the basic relations in $\mathcal A_X$ are $0$-definable in $\mathcal L_{\aut}$.
By definition the same then holds for $\mathcal L_{\douady}$.
This proves part~(a).

To prove part~(b), working in $(X,\mathcal L_{\aut})$, we let $F=\displaystyle \bigcup_{\id\neq\sigma\in\aut X}\operatorname{Fix}(\sigma)$ and show that there is a unique $1$-type in $X\setminus F$ over $F$.
Note that each member of $\aut(X)$ fixes $F$ setwise: if $f\in\operatorname{Fix}(\sigma)$ then $\tau(f)\in\operatorname{Fix}(\tau\sigma\tau^{-1})$.
Hence $\aut X$ acts on $X\setminus F$, and this action is clearly free.
It follows that $X$ is the disjoint union of $\aut X$ orbits and $F$, and each such orbit is being acted on regularly by $\aut(X)$.
It is then clear that for any $a,b\in X\setminus F$ there is an automorphism of the {\em structure} $(X,\mathcal L_{\aut})$ which fixes $F$ pointwise and takes $a$ to $b$.
So all elements of $X\setminus F$ have the same $\mathcal L_{\aut}$-type over the empty set. 
\end{proof}

Now we investigate $\aleph_0$-categoricity for simply connected strongly minimal compact complex manifolds.

\begin{theorem}
\label{scsm-finiteaut}
Suppose $X$ is a simply connected strongly minimal compact complex manifold.
Then the following are equivalent.
\begin{itemize}
\item[(i)]
$\aut X$ is finite.
\item[(ii)]
$X$ is essentially saturated and $(X,\mathcal L_{\aut})$ is $\aleph_0$-categorical.
\item[(iii)]
$X$ is essentially saturated and $(X,\mathcal L_{\douady})$ is $\aleph_0$-categorical.
\item[(iv)]
$X$ is essentially $\aleph_0$-categorical.
\end{itemize}
\end{theorem}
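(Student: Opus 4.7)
The plan is to establish the cycle (i)$\Rightarrow$(ii)$\Rightarrow$(iii)$\Rightarrow$(iv)$\Rightarrow$(i). The equivalence (ii)$\Leftrightarrow$(iii) follows immediately from Proposition~\ref{aclfix}(a): since the basic relations of $\mathcal{L}_{\aut}$ and $\mathcal{L}_{\douady}$ are mutually $\emptyset$-definable, the two structures share the same spaces of $\emptyset$-types in each finite sort, and hence one is $\aleph_0$-categorical iff the other is. The implication (iii)$\Rightarrow$(iv) is immediate since $\mathcal{L}_{\douady}$ is by construction a full countable language for $X$.

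For (i)$\Rightarrow$(ii), assume $\aut X$ is finite. Proposition~\ref{scsm-ctbleaut} gives that $X$ is essentially saturated and that $\mathcal{L}_{\aut}$ is a full countable analytic language. By Lemma~\ref{qefcl}, every $\emptyset$-definable subset of $X^n$ in $(X,\mathcal{L}_{\aut})$ is a finite union of sets $A\setminus B$ with $A,B$ $\acl(\emptyset)$-definable complex-analytic and $A$ irreducible. By Lemma~\ref{scsm}(b), each such $A$ is described by a finite list of equations ``$x_{r+j}=\sigma(x_i)$'' or ``$x_{r+j}=b$'' with $\sigma\in\aut X$ and $b\in X$, and since $A$ is $\acl(\emptyset)$-definable any such $b$ (a coordinate projection of $A$) lies in $\acl(\emptyset)\cap X$. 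By Proposition~\ref{aclfix}(b) this equals $\bigcup_{\id\neq\sigma\in\aut X}\operatorname{Fix}(\sigma)$, which is finite since $\aut X$ is finite and each fixed-point set of a non-identity automorphism is a proper complex-analytic subset of the strongly minimal manifold $X$. Hence the defining data of $\emptyset$-definable subsets of $X^n$ is drawn from a finite pool, producing only finitely many such sets, and by Ryll--Nardzewski $(X,\mathcal{L}_{\aut})$ is $\aleph_0$-categorical.

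The main work is the implication (iv)$\Rightarrow$(i). Assume $(X,\mathcal{L})$ is $\aleph_0$-categorical for some full countable $\mathcal{L}$. The existence of $\mathcal{L}$ gives essential saturation, Fact~\ref{cat-triv} forces triviality, and Proposition~\ref{scsm-ctbleaut} then gives that $\aut X$ is countable. Suppose for contradiction that $\aut X$ is infinite. Each graph $\Gamma_\sigma\subseteq X^2$ is $\mathcal{L}$-definable by fullness and so has a canonical base $c_\sigma$ in some imaginary sort of $\mathcal{L}^{\eq}$; by countability of $\mathcal{L}$ (and hence of the sorts of $\mathcal{L}^{\eq}$), pigeonhole puts infinitely many $c_\sigma$ in a common sort. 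Since $\aleph_0$-categoricity passes to $\mathcal{L}^{\eq}$, Ryll--Nardzewski gives only finitely many $\emptyset$-types on that sort, so pigeonhole again yields an infinite set of $c_\sigma$ realizing a single isolated type $p$. Letting $D$ be the $\emptyset$-definable set of realizations of $p$, and $D'\subseteq D$ the $\emptyset$-definable subset of parameters whose associated fibre is an irreducible complex-analytic subset of $X^2$ of dimension $\dim X$ projecting onto each factor, we obtain an infinite $\emptyset$-definable family of irreducible complex-analytic subsets of $X^2$ projecting onto each coordinate, contradicting Lemma~\ref{trivial-condition}(ii). The main technical obstacle in this last step is verifying that geometric conditions such as irreducibility, dimension, and surjectivity of coordinate projection are definable on parameters of a definable family — a standard fact in the model theory of compact complex manifolds, but one that must be invoked to pass from an ``infinite definable set of canonical bases'' to an ``infinite definable family of geometric objects'' to which Lemma~\ref{trivial-condition}(ii) applies.
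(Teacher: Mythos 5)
Your implications (i)$\Rightarrow$(ii), (ii)$\Leftrightarrow$(iii) and (iii)$\Rightarrow$(iv) are essentially the paper's arguments and are fine. The problem is in (iv)$\Rightarrow$(i), at the very first pigeonhole: you have countably infinitely many canonical parameters $c_\sigma$ distributed among countably many sorts of $\mathcal{L}^{\eq}$, and you conclude that some sort contains infinitely many of them. A countably infinite set partitioned into countably many classes need not have an infinite class, so this step is unjustified. The gap is not cosmetic: at that point in your argument you have only used that $\mathcal{L}$ is a full countable language, that $X$ is trivial, and that $\aut X$ is countably infinite --- and McMullen's K3 surface satisfies exactly these hypotheses. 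In that example each $\Gamma_{\sigma^n}$ is isolated (it is a basic $0$-definable predicate of the Douady language), its canonical parameter sits in its own trivial sort, and no single sort (indeed no single definable family) captures infinitely many of the graphs. So no contradiction can possibly be extracted before $\aleph_0$-categoricity enters in an essential way, yet you only invoke Ryll--Nardzewski \emph{after} the broken pigeonhole, on an imaginary sort whose existence the pigeonhole was supposed to supply.

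The repair is to apply Ryll--Nardzewski to the home sort rather than to a sort of canonical parameters, which is what the paper does. By triviality, for $a\notin\acl(\emptyset)$ and any $\sigma\in\aut X$ one has $\sigma(a)\in\acl(a)$ (since $\sigma(a)\in\acl(F,a)$ for any defining parameters $F$ of $\Gamma_\sigma$ chosen independent from $a$, and $\sigma(a)\notin\acl(F)$); equivalently, every $\Gamma_\sigma$ is $\acl(\emptyset)$-definable. Now $\aleph_0$-categoricity gives finitely many $2$-types over $\emptyset$, hence a uniform finite bound on $|\acl(a)\cap X|$ (equivalently, finitely many strong $2$-types and so finitely many $\acl^{\eq}(\emptyset)$-definable subsets of $X^2$). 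Choosing $a$ generic and outside the fixed-point sets of all nonidentity automorphisms (possible since $\aut X$ is countable and each such fixed-point set is finite by strong minimality), the map $\sigma\mapsto\sigma(a)$ is injective into the finite set $\acl(a)\cap X$, so $\aut X$ is finite. Your instinct to reach an infinite definable family of correspondences and contradict Lemma~\ref{trivial-condition}(ii) cannot work here, because triviality alone does not preclude infinitely many \emph{isolated} correspondences; it is precisely the boundedness of $\acl$ coming from $\aleph_0$-categoricity that must do the work.
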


\begin{proof}
(i) $\implies$ (ii).
Assume that $\aut X$ is finite.
By Proposition~\ref{scsm-ctbleaut} we know that $X$ is essentially saturated and trivial and that $\mathcal L_{\aut}$ is a full countable analytic language for $X$.
We need to check that for each $n>0$, there are only finitely many $0$-definable subsets of $X^n$ in $(X,\mathcal{L}_{\aut})$.
By Lemma~\ref{qefcl}, since we are working in an analytic language, it suffices to show that there are only finitely many irreducible complex-analytic subsets of $X^n$ that are $\acl(\emptyset)$-definable in $(X,\mathcal{L}_{\aut})$.

For $n=1$ we need to count the number of $\acl(\emptyset)$-definable points in $X$.
Since $\aut X$ is finite and the set of fixed points of each nontrivial member of $\aut X$ is finite, Proposition~\ref{aclfix}(b) tells us that $\acl(\emptyset)\cap X$ is finite.
The $n=2$ case is taken care of by the description of the irreducible complex-analytic subsets of $X^2$ given by Lemma~\ref{scsm}(a): the only possibilities are $X^2$, points, slices, or graphs of automorphisms.
The first and last of these only contribute finitely many.
In the case of points or slices, note that the singletons involved, by $\acl(\emptyset)$-definability and saturation, must be in $\acl(\emptyset)\cap X$ -- and hence these also only contribute finitely many possibilities for irreducible complex-analytic subsets of $X^2$.

The $n=2$ case now implies the general case:
If $n>2$ and $A\subseteq X^n$ is irreducible complex-analytic $\acl(\emptyset)$-definable, then each of the projections of $A$ to $X^2$ are also irreducible complex-analytic $\acl(\emptyset)$-definable.
But by triviality $A$ is determined by its projections to $X^2$ -- see Lemma~\ref{scsm}(b).
Hence there are only finitely many possibilities for $A$.

(ii) $\implies$ (iii).
Because of $\aleph_0$-categoricity $X$ must have trivial geometry (see Fact~\ref{cat-triv}).
The implication now follows immediately from Proposition~\ref{aclfix}(a).

(iii) $\implies$ (iv).
Clear.

(iv) $\implies$ (i).
Suppose $\mathcal L$ is a full countable language for $X$ such that $(X,\mathcal{L})$ is $\aleph_0$-categorical.
Then $(X,\mathcal L)$ must have trivial geometry.
It follows that every automorphism of $X$ is $\acl(\emptyset)$-definable in $(X,\mathcal{L})$.
By $\aleph_0$-categoricity, there can be only finitely many such.
\end{proof}

This, together with an example of McMullen discussed below, resolves in the negative a conjecture of the second author and Thomas Scanlon from~\cite{pillayscanlon2001}:

\begin{corollary}
There exist trivial strongly minimal compact K\"ahler manifolds which are not $\aleph_0$-categorical in any full countable language.
\end{corollary}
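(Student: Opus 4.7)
The plan is to combine Theorem~\ref{scsm-finiteaut} with an explicit example due to McMullen. McMullen has constructed $K3$ surfaces $X$ admitting automorphisms of positive entropy whose full automorphism group is infinite cyclic, i.e.\ $\aut(X)\cong\mathbb{Z}$. Every $K3$ surface is simply connected, compact, and K\"ahler, of complex dimension two. For a sufficiently generic $X$ in McMullen's family one further has that the Picard group is trivial, so $X$ contains no curves; since $\dim X=2$ this rules out any proper positive-dimensional complex-analytic subset, so $X$ is strongly minimal in the sense of $\mathcal A(X)$.

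Given this, the corollary is a two-line deduction from what has already been established in this section. Since $\aut X\cong\mathbb{Z}$ is countable, Proposition~\ref{scsm-ctbleaut} forces $X$ to be essentially saturated and of trivial geometry. Hence $X$ is a trivial strongly minimal compact K\"ahler manifold. On the other hand $\aut X$ is infinite, so condition~(i) of Theorem~\ref{scsm-finiteaut} fails; by the equivalence (i)$\iff$(iv) established there, condition~(iv) must fail as well, meaning that $(X,\mathcal L)$ is not $\aleph_0$-categorical for any full countable language $\mathcal L$. This $X$ therefore witnesses the corollary.

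The ``hard part'' is entirely outsourced: one relies on McMullen for both the construction of an automorphism of positive entropy generating $\aut(X)\cong\mathbb{Z}$ and for the fact that the generic member of the family has no curves (so that strong minimality holds). The model-theoretic content — converting ``$\aut X$ is countably infinite'' into ``$X$ is a trivial strongly minimal K\"ahler manifold that fails essential $\aleph_0$-categoricity'' — is then handled uniformly by Theorem~\ref{scsm-finiteaut}, with no further work required.
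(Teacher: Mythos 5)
Your proposal is correct and follows essentially the same route as the paper: a generic (trivial Picard group, hence curve-free and strongly minimal) K3 surface from McMullen's construction with $\aut X\cong\mathbb Z$, combined with Proposition~\ref{scsm-ctbleaut} for triviality and the equivalence (i)$\iff$(iv) of Theorem~\ref{scsm-finiteaut} to rule out essential $\aleph_0$-categoricity. The only cosmetic difference is that the paper routes the classification of $\aut X$ for generic K3 surfaces through Oguiso and cites the survey of Macr\`{\i}--Stellari, while you invoke positive entropy, which is not needed for the argument.
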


\begin{proof}
This comes from the study of generic analytic K3 surfaces due to Gross, McMullen and Oguiso, though we were informed by the survey article~\cite{macristellari}.
An {\em analytic K3 surface} is a smooth simply connected compact surface $X$ with trivial canonical bundle.
They are K\"ahler manifolds (and hence essentially saturated).
A K3 surface is {\em generic} if it has trivial Picard group (these are in fact dense in the moduli space of K3 surfaces).
Generic K3 surfaces are strongly minimal (as any curve on $X$ would give rise to an effective divisor and hence a nontrivial line bundle).
Oguiso has shown that a generic K3 surface either has trivial automorphism group or $\aut X=\mathbb Z$.
In particular, by Proposition~\ref{scsm-ctbleaut}, all generic K3 surfaces have trivial geometry.
McMullen produced examples with $\aut X=\mathbb Z$, which by Theorem~\ref{scsm-finiteaut} cannot be $\aleph_0$-categorical in any full countable language.
\end{proof}

\section{The general case}

\noindent
Fix a strongly minimal compact complex manifold $X$.
There is no harm in assuming that $\dim(X) > 1$, as we are interested in $X$ with trivial geometry.
Some of what we did in the previous section goes through without the assumption of simply connectedness if we replace automorphisms by finite-to-finite correspondences, but there are additional complications.
By a {\em finite-to-finite coorrespondence on $X$} we mean an irreducible complex-analytic subset $S\subset X^2$ such that both co-ordinate projections are surjective finite-to-one maps.
We denote the set of all finite-to-finite correspondences by $\corr X$.
In the simply connected case we used normalisations to see that any such correspondence must be the graph of an automorphism.
In the general setting that argument yields only the following:

\begin{lemma}
\label{mto1}
Suppose $\dim X>1$, $S\in\corr X$, and $\pi:S\to X$ is one of the two co-ordinate projections.
Then $\pi$ is everywhere $m$-to-one for some $m>0$.
\end{lemma}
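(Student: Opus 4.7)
The plan is to apply Fact~\ref{norm} to the projection $\pi\colon S \to X$, and then show that the étale-cover structure on a normalization transfers back to $\pi$ itself via strong minimality.

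Let $\rho\colon S' \to S$ be a normalization of $S$. Since $S \in \corr X$, $\pi$ is surjective and finite-to-one, so Fact~\ref{norm} applies and $\tilde\pi := \pi \circ \rho\colon S' \to X$ is an unramified covering. As $X$ is connected, $\tilde\pi$ has constant fiber cardinality, say $d > 0$. Since $\rho$ is surjective, it maps each $\tilde\pi^{-1}(x)$ onto $\pi^{-1}(x)$, so $|\pi^{-1}(x)| \leq d$ for every $x \in X$, with equality iff $\rho$ is injective on $\tilde\pi^{-1}(x)$. The goal is to show equality everywhere, which will give $m = d$.

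Next, I would use strong minimality to control where $\rho$ can fail to be injective. The non-normal locus $N \subseteq S$ is a proper complex-analytic subset of dimension at most $\dim S - 1 = \dim X - 1$; since $\pi$ is proper and finite, $\pi(N)$ is a proper complex-analytic subset of $X$ of the same dimension, and by strong minimality together with $\dim X > 1$, $\pi(N)$ is $0$-dimensional, hence finite. Consequently $N$ itself is finite, and for every $x \in X \setminus \pi(N)$ the fiber $\pi^{-1}(x)$ lies entirely in the normal locus of $S$, where $\rho$ restricts to a biholomorphism; hence $|\pi^{-1}(x)| = d$ for all such $x$.

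The remaining step --- and, I expect, the main obstacle --- is to rule out the finitely many possibly exceptional fibers above $\pi(N)$. My approach would be to analyze the closed analytic set $R := \{(p, q) \in S' \times_X S' : \rho(p) = \rho(q)\}$ inside the étale cover $S' \times_X S' \to X$. A strong-minimality argument shows that every irreducible component of $R$ either equals a full connected component of $S' \times_X S'$ or projects to a proper analytic subset of $X$ and is therefore $0$-dimensional. The first possibility is ruled out for any non-diagonal component because $\rho$ is bimeromorphic, so generic fibers of $\tilde\pi$ have no $\rho$-identifications. Ruling out the $0$-dimensional non-diagonal components --- which would exactly correspond to isolated non-normal points of $S$ --- should then complete the argument, perhaps by exploiting the symmetric situation obtained by also applying Fact~\ref{norm} to the other coordinate projection $\pi'$ to obtain enough rigidity on the embedding $S \subseteq X^2$ to force $R = \Delta_{S'}$, so that $\rho$ is injective on every fiber of $\tilde\pi$ and $\pi$ is everywhere $d$-to-one.
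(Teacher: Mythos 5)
Your overall strategy coincides with the paper's: normalise, invoke Fact~\ref{norm} to see that $\pi\circ\rho:S'\to X$ is an unramified covering of constant degree $d$, and use surjectivity of $\rho$ on fibres to get $|\pi^{-1}(x)|\leq d$ everywhere with equality exactly where $\rho$ is fibrewise injective. But you stop at the crux. The content of the lemma is precisely the reverse inequality $|\pi^{-1}(x)|\geq d$ at the finitely many points lying under the non-normal locus, and your proposal does not establish it: the final paragraph ends with ``should then complete the argument, perhaps by\dots'', which is an acknowledged gap, not a proof. The paper closes this by noting that the generic fibre cardinality $m$ of $\pi$ is also the \emph{least} cardinality of any fibre of $\pi$; granting that, every fibre has at least $m$ points, the surjection $(\pi\circ\rho)^{-1}(x)\twoheadrightarrow\pi^{-1}(x)$ from an $m$-point set forces equality, and $\rho$ is then a bijection. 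That lower bound is exactly the ingredient absent from your write-up.

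Moreover, the route you sketch for filling the gap is unlikely to succeed as stated. A non-diagonal connected component $C$ of $S'\times_X S'$ is itself an unramified cover of $X$, and its image in $X^2$ under $(p,q)\mapsto\bigl(\pi'\rho(p),\pi'\rho(q)\bigr)$ is just another irreducible $\dim(X)$-dimensional analytic set surjecting onto both factors, i.e.\ an element of $\corr X$. Such a correspondence, while distinct from $\Delta$, can perfectly well meet $\Delta$ in a finite nonempty set --- this is exactly the phenomenon the paper isolates later as the set $X_{00}$, which is in general nonempty. So dimension counting and ``finiteness of the agreement locus'' cannot by themselves force $R=\Delta_{S'}$; one needs an argument that uses the fact that the two points being identified agree in \emph{both} coordinates of $X^2$, not merely that two holomorphic maps $C\to X$ agree somewhere. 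As it stands, your argument proves only that $\pi$ is $m$-to-one off a finite subset of $X$, which is weaker than the statement.
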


\begin{proof}
We know that $\pi$ is finite-to-one and hence $m$-to-one generically for some $m>0$.
Note that $m$ is the least cardinality of any of the fibres of $\pi$.
Let $\rho:S'\to S$ be a normalisation of $S$.
Then $\rho$ is generically one-to-one so that $\pi\circ\rho:S'\to X$ is also generically $m$-to-one.
But by Fact~\ref{norm}, $\pi\circ\rho$ is an unramified covering and hence everywhere $m$-to-1.
It follows that $\rho$ is bijective and $\pi$ is everywhere $m$-to-one.
\end{proof}

Unlike in the simply connected case, triviality will not completely reduce the study of irreducible complex-analytic subsets of $X^n$ to members of $\corr X$.
The main problem is that intersections of pull-backs of correspondences may not be irreducible, and so their irreducible components need to be taken into account.

\begin{definition}
By a {\em generalised correspondence} on $X$ we mean an irreducible $\dim(X)$-dimensional complex-analytic subset of $X^n$ that projects onto $X$ in each co-ordinate.
\end{definition}

\begin{lemma}
\label{gencorr=comp}
Suppose $A\subseteq X^n$ is a generalised correspondence on a strongly minimal compact complex manifold.
For each $i=2,\dots n$, let $\pi_i:X^n\to X^2$ be the co-ordinate projection $(x_1,\dots,x_n)\mapsto (x_1,x_i)$.
Then $S_i:=\pi_i(A)\in\corr X$ for all $i=1,\dots,\ell$ and $A$ is an irreducible component of $\bigcap_{i=2}^n\pi_i^{-1}(S_i)$.
\end{lemma}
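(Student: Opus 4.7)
The plan is to handle the two assertions in sequence. For the first, that each $S_i:=\pi_i(A)$ is a finite-to-finite correspondence, note that $S_i$ is the image of the compact irreducible analytic set $A$ under the holomorphic map $\pi_i$, so by Remmert's proper mapping theorem it is an irreducible compact complex-analytic subset of $X^2$. Since $A$ projects onto $X$ in each coordinate, both coordinate projections of $S_i$ onto $X$ are surjective. Moreover $\dim S_i\leq \dim A=\dim X$, so $S_i\neq X^2$. If one of the projections $p:S_i\to X$ were not finite-to-one, it would have a positive-dimensional fibre, which by strong minimality of $X$ would have to be all of $X$; thus $S_i$ would contain a slice $\{a\}\times X$ or $X\times\{a\}$, and by irreducibility and the bound $\dim S_i\leq \dim X$ we would get $S_i$ equal to that slice, contradicting surjectivity of the other projection. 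Hence both projections of $S_i$ are surjective and finite-to-one, so $S_i\in\corr X$.

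For the second assertion, set $Y:=\bigcap_{i=2}^n\pi_i^{-1}(S_i)$. The inclusion $A\subseteq Y$ is immediate from $\pi_i(A)=S_i$, so $A$ is contained in some irreducible component $A'$ of $Y$, and it suffices to show $\dim A'=\dim A=\dim X$. The key is to bound the dimension of $Y$ using the first-coordinate projection $q:Y\to X$, $(x_1,\ldots,x_n)\mapsto x_1$. A point of $Y$ lying over $x_1\in X$ is determined by a tuple $(x_2,\ldots,x_n)$ with $x_i$ in the fibre of $S_i$ over $x_1$ under the first projection; by Lemma~\ref{mto1}, each such fibre has exactly $m_i$ elements (where $m_i$ is the generic multiplicity of that projection of $S_i$). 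Hence every fibre of $q$ is finite, so $\dim Y\leq \dim X$ by the fibre dimension inequality.

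Combining the two observations, we get $\dim X=\dim A\leq \dim A'\leq \dim Y\leq \dim X$, so $\dim A'=\dim A$. Since $A$ is irreducible and $A\subseteq A'$ with $A'$ also irreducible of the same dimension, $A=A'$, and so $A$ is itself an irreducible component of $Y$.

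The arguments are all essentially dimension bookkeeping; the only nontrivial input is Lemma~\ref{mto1}, which is needed to guarantee that each fibre of $q$ is actually finite (and not merely generically so). The first half is the more delicate one to write out carefully, as one must rule out $S_i=X^2$ and non-finite projections simultaneously using both the dimension constraint $\dim S_i\leq \dim X$ and strong minimality of $X$.
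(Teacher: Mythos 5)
Your proposal is correct and follows essentially the same route as the paper: a dimension count plus strong minimality of $X$ to show each $S_i$ has surjective finite-to-one projections, and then a bound $\dim\bigl(\bigcap_{i}\pi_i^{-1}(S_i)\bigr)\leq\dim X$ coming from finiteness of the fibres over the first coordinate (the paper phrases this as the intersection having $\acl$-dimension at most $1$, which is the same bookkeeping in model-theoretic language). Your appeal to Lemma~\ref{mto1} is harmless but unnecessary, since membership in $\corr X$ already means \emph{all} fibres of both projections are finite, not merely the generic ones.
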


\begin{proof}
Clearly $S_i\subseteq X^2$ is irreducible complex-analytic and projects onto $X$ in each co-ordinate.
Hence $\dim X\leq\dim(S_i)\leq\dim A=\dim X$, and so $\dim(S_i)=\dim X$ also.
From this it follows that the co-ordinate projections of $S_i$ onto $X$ are finite-to-one.
That is, $S_i\in\corr X$.

Note that $B:=\bigcap_{i=2}^n\pi_i^{-1}(S_i)$ is of $\acl$-dimension at most $1$ as every co-ordinate is algebraic over the first co-ordinate.
Hence $\dim B\leq \dim X$.
But $A\subseteq B$, so that $\dim B=\dim X$, and $A$ is an irreducible component of $B$.
\end{proof}

We have the following analogue of Lemma~\ref{scsm}.

\begin{lemma}
\label{sm}
Suppose $X$ is a strongly minimal compact complex manifold.
\begin{itemize}
\item[(a)]
The only irreducible complex-analytic subsets of $X^2$ are points, $X^2$ itself, vertical and horizontal ``slices'' $\{a\}\times X$ and $X\times \{a\}$ where $a\in X$, and finite-to-finite correspondences.
\item[(b)]
Suppose moreover that $X$ has trivial geometry.
If $A\subseteq X^n$ is an irreducible complex-analytic subset, then, after some permutation of the co-ordinates, $A$ is a product of generalised correspondences and singletons.
\end{itemize}
\end{lemma}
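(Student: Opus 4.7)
The plan is to mirror the structure of Lemma~\ref{scsm}, replacing graphs of automorphisms by finite-to-finite correspondences in~(a) and by generalised correspondences in~(b).

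Part~(a) is actually easier than its simply connected analogue, since no normalisation argument is needed. Let $A\subseteq X^2$ be irreducible complex-analytic, and consider the coordinate projections $\pi_i:A\to X$. By Remmert's proper mapping theorem each $\pi_i(A)$ is an irreducible compact complex-analytic subset of $X$, hence by strong minimality either a point or all of $X$. The cases in which at least one image is a point immediately account for points and slices. If both projections are surjective but $A\neq X^2$, then the Morley rank of $A$ must be $1$, so the generic fibre of each $\pi_i$ is finite. A short upper-semicontinuity argument rules out any higher-dimensional special fibre: such a fibre would have to be a whole slice $\{a\}\times X$ by strong minimality on $X$, and even a single such slice inside $A$ forces either a reducibility or a dimension violation with $\dim A=\dim X$. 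Hence both projections are finite-to-one everywhere, and $A\in\corr X$.

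For part~(b), the plan is to use triviality to decompose a generic point of $A$ into independent interalgebraic blocks. In a sufficiently saturated elementary extension $\mathcal A'$, let $a=(a_1,\dots,a_n)$ be a generic point of $A$ and fix an $\acl$-basis $\{a_{j_1},\dots,a_{j_r}\}\subseteq\{a_1,\dots,a_n\}$. By triviality, every $a_i$ lies in $\acl(\emptyset)\cup\bigcup_k\acl(a_{j_k})$; if $a_i\notin\acl(\emptyset)$ then, since both $a_i$ and the relevant $a_{j_k}$ are generic in $X$, they must in fact be interalgebraic, and the index $k$ is unique on pain of forcing two distinct basis elements to be interalgebraic via $a_i$. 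This yields a partition $\{1,\dots,n\}=G_0\sqcup G_1\sqcup\dots\sqcup G_r$, where $G_0$ indexes the constant coordinates and $G_k$ for $k\geq 1$ indexes the coordinates interalgebraic with $a_{j_k}$.

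Let $B_k\subseteq X^{G_k}$ be the image of $A$ under projection to the $G_k$-coordinates. By the proper mapping theorem each $B_k$ is irreducible compact complex-analytic; strong minimality forces $B_0$ to be a product of singletons in $X$ (one per $i\in G_0$), while for $k\geq 1$, $B_k$ is irreducible of complex dimension $\dim X$ and projects onto $X$ in each coordinate, so $B_k$ is a generalised correspondence (reducing to $X$ itself when $|G_k|=1$). After permuting coordinates we have $A\subseteq B_0\times B_1\times\dots\times B_r$, and since the product of irreducible complex-analytic spaces is irreducible, and both sides have complex dimension $r\cdot\dim X$---the right by construction, the left via the generically finite surjection from $A$ onto $X^J$ where $J=\{j_1,\dots,j_r\}$---equality holds. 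The only subtle point is the well-definedness of the partition, which is exactly what triviality buys us; the analytic decomposition then falls out of dimension counting.
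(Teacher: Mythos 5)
Your proposal is correct and follows essentially the same route as the paper: for (a), strong minimality plus the observation that a full fibre would be a $\dim X$-dimensional slice inside the $\dim X$-dimensional irreducible $A$; for (b), a generic point in a saturated extension, triviality to partition the coordinates into singleton coordinates and blocks interalgebraic with the members of an $\acl$-basis, and a dimension/independence count to identify $A$ with the product of the block loci. The only cosmetic difference is that you absorb the $\acl(\emptyset)$-coordinates into a single block $G_0$ where the paper strips them off by induction.
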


\begin{proof}
Suppose
$A\subset X^2$ is a proper irreducible complex-analytic subset that projects onto both co-ordinates.
By strong minimality all the fibres of the projections are either finite or all of $X$.
By dimension calculations the projections must be generically finite-to-one.
But then $\dim A=\dim X$, and so by irreducibility, all the fibres of both projections must be finite.
That is, $A\in\corr X$.

For part~(b) the $n=1$ case is clear, and the $n=2$ case is part~(a).
For $n>2$, let $a=(a_1,\dots,a_n)\in X(\mathcal{A}')$ be a generic point of $A$ in a sufficiently saturated elementary extension $\mathcal A'$ of $\mathcal A$.
Note that if some $a_i\in\acl(\emptyset)$ then $a_i$ is a standard point of $X$ and so after permuting co-ordinates $A$ is of the form $A'\times\{a_i\}$, and we are done by induction.
We may therefore assume that all $a_i\notin\acl(\emptyset)$.
Let $\{b_1,\dots,b_r\}$ be an $\acl$-basis for $\{a_1,\dots,a_n\}$.
By triviality, and the fact that no co-ordinate is in $\acl(\emptyset)$, each $a_j$ is in $\acl(b_i)$ for a unique $b_i$.
Hence, after permuting the co-ordinates, we can write
$(a_1,\dots,a_n)=(\bar b_1,\dots,\bar b_r)$ where $\bar b_i=(b_i=b_{i,1},\dots,b_{i,k_i})$ and $b_{i,j}\in\acl(b_{i,1})$, for all $i\leq r$ and $j\leq k_i$.
For each $i\leq r$ let $A_i=\locus(\bar b_i)$.
Then $A_i\subseteq X^{k_i}$ is irreducible complex-analytic; it is of dimension $\dim X$ since $\bar b_i$ is of $\acl$-dimension $1$, and it projects onto $X$ in each co-ordinate since every co-ordinate of $\bar b_i$ is not in  $\acl(\emptyset)$.
That is, each $A_i$ is a generalised correspondence.
Since $\{b_{1,1},\dots,b_{r,1}\}$ is $\acl$-independent, we have $A=A_1\times\cdots\times A_r$, as desired.
\end{proof}

\begin{definition}
\label{lcorr}
Given a strongly minimal compact complex manifold $X$, the {\em language of generalised correspondences} for $X$, denoted by $\mathcal{L}_{\gc}$, is the language where there is a predicate for each generalised correspondence on $X$.
\end{definition}

It follows from Lemma~\ref{gencorr=comp} that if $\corr X$ is countable then so is $\mathcal L_{\gc}$.

The arguments for Proposition~\ref{scsm-ctbleaut} and Proposition~\ref{aclfix}(a) now generalise to:

\begin{proposition}
\label{sm-ctblecorr}
Suppose $X$ is a strongly minimal compact complex manifold.
Then the following are equivalent:
\begin{itemize}
\item[(i)]
$\corr X$ is countable.
\item[(ii)]
$X$ is essentially saturated and has trivial geometry.
\end{itemize}
In this case $\mathcal{L}_{\gc}$ is a full countable analytic language for $X$, and $(X,\mathcal{L}_{\douady})=(X,\mathcal{L}_{\gc})$ in the sense that every basic relation of one is $0$-definable in the other, and vice versa.
\end{proposition}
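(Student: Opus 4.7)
The plan is to parallel the proofs of Propositions~\ref{scsm-ctbleaut} and~\ref{aclfix}(a), substituting generalised correspondences for graphs of automorphisms throughout, and invoking Lemmas~\ref{sm} and~\ref{gencorr=comp} in place of Lemma~\ref{scsm}. For the implication (i)~$\implies$~(ii) together with the ``moreover'' about $\mathcal{L}_{\gc}$: countability of $\corr X$ precludes any infinite definable family of members of $\corr X$, since a definable family of subsets of $X^{2}$ has either finitely many or uncountably many distinct members. Combined with Lemma~\ref{sm}(a), which restricts the irreducible complex-analytic subsets of $X^{2}$ projecting onto both coordinates to $X^{2}$ and elements of $\corr X$, Lemma~\ref{trivial-condition} then forces trivial geometry. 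Next, Lemma~\ref{sm}(b) writes every irreducible complex-analytic subset of $X^{n}$ as a product of generalised correspondences and singletons, while Lemma~\ref{gencorr=comp} realises each generalised correspondence on $X^{n}$ as one of the finitely many irreducible components of $\bigcap_{i=2}^{n}\pi_{i}^{-1}(S_{i})$ with $S_{i}\in\corr X$. Hence the collection of generalised correspondences is countable, and so $\mathcal{L}_{\gc}$ is a countable language. Fullness then follows since Lemma~\ref{sm}(b) expresses every irreducible complex-analytic subset using basic predicates of $\mathcal{L}_{\gc}$ together with point parameters; analyticity is immediate, since the basic predicates are themselves complex-analytic and are preserved setwise by $\mathcal{L}_{\gc}$-automorphisms. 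In particular $X$ is essentially saturated.

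For (ii)~$\implies$~(i), let $\mathcal{L}$ be a full countable language witnessing essential saturation. Each $S\in\corr X$ is, by fullness, defined by some $\mathcal{L}$-formula $\phi_{S}(x,y,\bar{c}_{S})$. If $\corr X$ were uncountable, then since there are only countably many $\mathcal{L}$-formulas, pigeonhole produces a single $\phi(x,y,\bar{z})$ defining uncountably many distinct members of $\corr X$ as $\bar{c}$ varies. The resulting infinite definable family of irreducible complex-analytic subsets of $X^{2}$ projecting onto each coordinate contradicts triviality through Lemma~\ref{trivial-condition}.

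For $(X,\mathcal{L}_{\douady})=(X,\mathcal{L}_{\gc})$, I would prove the analogue of Claim~\ref{douady-aut} by induction on $n$: the prime component of $D(X^{n})$ containing an irreducible complex-analytic $A\subseteq X^{n}$ is of the form $X^{m}$, with universal family cut out by equations coming from generalised correspondences and diagonals. The inductive step mirrors Claim~\ref{douady-aut}: if some projection of $A$ to $X$ is a single point, write $A=A'\times\{a\}$ and base-change the prime component of $A'$; otherwise Lemma~\ref{sm}(b) forces $A$ to be a product of generalised correspondences, and one argues that the prime component is zero-dimensional. Zero-dimensionality follows from triviality plus Lemma~\ref{trivial-condition} (forcing the Douady component of any correspondence in $D(X^{2})$ to be zero-dimensional) together with the cited Proposition~3.4 of~\cite{moosamorarutoma} (promoting this to $D(X^{n})$). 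The resulting description shows that each basic relation of $\mathcal{L}_{\douady}$ is $0$-definable in $\mathcal{L}_{\gc}$, while conversely each generalised correspondence, living in a zero-dimensional prime component, is itself a basic predicate of $\mathcal{L}_{\douady}$. The main obstacle I anticipate is this last step: without simple connectedness, the universal family over the prime component is cut out by multi-valued correspondences rather than graphs of biholomorphisms, so the coordinate-by-coordinate bookkeeping needed to match the simply connected template, and to invoke the cited result from~\cite{moosamorarutoma}, must be carried out with some care.
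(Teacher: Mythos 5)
Your proposal is correct and follows essentially the same route as the paper's proof: Lemma~\ref{sm} and Lemma~\ref{gencorr=comp} for the equivalence of (i) and (ii) and for the countability and fullness of $\mathcal{L}_{\gc}$, a pigeonhole/definable-family argument against triviality for the converse, and an analogue of Claim~\ref{douady-aut} for the identification with the Douady language. The only divergence is that the paper establishes isolation of a generalised correspondence in $D(X^n)$ by a direct counting argument (a positive-dimensional prime component would yield uncountably many generalised correspondences, using Lemma~2.3 of~\cite{moosamorarutoma} and strong minimality to see that the other members of the component still project onto $X$ in each coordinate), whereas you deduce isolation in $D(X^2)$ from triviality and promote it to $D(X^n)$ via Proposition~3.4 of~\cite{moosamorarutoma} --- which is precisely how the paper handles arbitrary irreducible $A$ in the second half of its own proof, so the ``obstacle'' you anticipate is resolved there exactly as you describe, with Lemma~\ref{sm}(b) supplying the atomic $\mathcal{L}_{\gc}$-description of the isolated $A$.
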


\begin{proof}
The equivalence of (i) and~(ii) is almost exactly as in Proposition~\ref{scsm-ctbleaut}.
 The countability of $\corr X$ implies the countability of $\mathcal L_{\gc}$ by Lemma~\ref{gencorr=comp}.
It also implies that there is no infinite definable family of finite-to-finite correspondences.
Since  finite-to-finite correspondences are the only irreducible complex-analytic subsets of $X^2$ that project onto both co-ordinates (Lemma~\ref{sm}(a)), we see by condition~(ii) of Lemma~\ref{trivial-condition} that the geometry on $X$ must be trivial.
Hence every irreducible complex-analytic subset is up to a co-ordinate permutation a product of generalised correspondences and singletons (Lemma~\ref{sm}(b)).
From this it follows that $\mathcal{L}_{\gc}$ is a full countable analytic language for $X$.
For the converse, note that if $X$ were essentially saturated and $\corr X$ were uncountable then there would be an infinite definable family of finite-to-finite correspondences, contradicting triviality.

Now suppose that the equivalent conditions~(i) and~(ii) are satisfied.
We want to show that $(X,\mathcal{L}_{\douady})=(X,\mathcal{L}_{\gc})$.
Following the argument for automorphisms, in order to show that every generalised correspondence is $0$-definable in $(X,\mathcal{L}_{\douady})$ we prove that they are isolated in the sense that each one lives in a zero-dimensional prime component of the Douady space.
Suppose $A\subseteq X^n$ is a generalised correspondence on $X$ living in the irreducible prime component $C$ of the Douady space of $X^n$.
There exists a proper complex-analytic subset $E\subset C$, such that distinct points in $C\setminus E$ correspond to distinct irreducible $\dim(X)$-dimensional complex-analytic subsets of $X^n$.
In Lemma~2.3 of~\cite{moosamorarutoma} it is pointed out that projecting onto a singleton in some co-ordinate is a property that is preserved in irreducible components of $D(X^n)$.
Since $A$ does not project onto a singleton in any co-ordinate, this must also be true of each of the complex-analytic subsets of $X^n$ given by points in $C$.
By strong  minimality it follows that each of the irreducible complex-analytic sets corresponding to points of $C\setminus E$ project onto $X$ in every co-ordinate.
That is, distinct points of $C\setminus E$ give rise to distinct generalised correspondences on $X$.
As there are only countably many generalised correspondences, $C$ must be zero-dimensional, as desired.

Finally, still assuming countability of $\corr X$, we need to prove that every basic relation in $(X,\mathcal{L}_{\douady})$ is $0$-definable in $(X,\mathcal{L}_{\gc})$.
Here the argument is exactly as in Proposition~\ref{aclfix}(a) once we replace Claim~\ref{douady-aut} by:
{\em For every irreducible complex-analytic $A\subseteq X^n$,
the prime component $C$ of $D(X^n)$ in which $A$ lives is of the form $C=X^m$ for some $0\leq m\leq n$, and the universal family restricted to $C$, $Z:=Z(X^n)|_C\subseteq C\times X^n$ is defined by a conjunction of atomic $\mathcal{L}_{\gc}$-formulas.}
The claim is also proved by induction on $n$, with $n=1$ being clear.
For $n>1$, if some co-ordinate projection of $A$ to $X$ is a singleton then one reduces to the induction hypothesis exactly as in the proof of Claim~\ref{douady-aut}.
Hence, by Lemma~\ref{sm}(a), we may assume that every co-ordinate projection of $A$ to $X^2$ is a finite-to-finite correspondence on $X$.
We have already seen that the elements of $\corr X$ are all isolated in $X^2$, so all the projections of $A$ to $X^2$ are isolated.
It follows by triviality, using Proposition~3.4 of~\cite{moosamorarutoma}, that $A$ must be isolated in $X^n$.
Hence $C=X^0$ and $Z=A$.
So it remains to observe that in this case, $A$ itself is defined by a conjunction of atomic $\mathcal{L}_{\gc}$-formulas.
But that is just what Lemma~\ref{sm}(b) says, given that no co-ordinate projection of $A$ is a singleton.
\end{proof}

We now need to analyse the structure $(X,\mathcal L_{\gc})$.
In particular we need to describe the algebraic closure of the empty set.
Unlike in the simply connected case, we are not just working with a pure group action.
However, as it turns out, we are not so very far away from that situation.

\begin{definition}
Given $S,T\in\corr X$ and $x\in X$ we set
\begin{itemize}
\item[]
$T\circ S:=\big\{(a,b)\in X^2 : \text{ for some $c\in X$, $(a,c)\in S$ and $(c,b)\in T$}\big\}$,
\item[]
$S^{-1}:=\big\{(a,b)\in X^2:(b,a)\in S\big\}$,
\item[]
$\Delta:=\big\{(a,a):a\in X\big\}$
\item[]
$\orb(x):=\big\{a\in X:(x,a)\in U \text{ for some $U\in\corr X$}\big\}$,
\item[]
$X_{00}:=\big\{a\in X:(a,a)\in U\text{ for some $U\in\corr X$ with $U\neq\Delta$}\big\}$,
\item[]
$\displaystyle X_0:=\bigcup_{a\in X_{00}}\orb(a)$.
\end{itemize}
\end{definition}

Note that in the simply connected case when the only finite-to-finite correspondences are the graphs of automorphisms, $T\circ S$ is the graph of the composition of the automorphisms, $S^{-1}$ is the graph of the inverse automorphism, $\Delta$ is the graph of the identity automorphism, $\orb(x)$ is the orbit of $x$ under the action of $\aut X$ on $X$, and $X_0=X_{00}$ is the set of fixed points of $X$ under this action.
We will eventually prove that even when $X$ is not simply connected, $X_0$ is the algebraic closure of the empty set in $(X,\mathcal L_{\gc})$.

\begin{proposition}
\label{orbequiv}
The relation $y\in\orb(x)$ is an equivalence relation on $X$.
\end{proposition}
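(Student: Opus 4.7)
The plan is to verify reflexivity, symmetry, and transitivity in turn. Reflexivity is immediate: the diagonal $\Delta$ is an irreducible $\dim X$-dimensional subvariety of $X^2$ with both projections bijective, so $\Delta\in\corr X$, and $(x,x)\in\Delta$ gives $x\in\orb(x)$. Symmetry is also immediate: if $U\in\corr X$ witnesses $y\in\orb(x)$, then $U^{-1}\in\corr X$ witnesses $x\in\orb(y)$. The substance of the proposition therefore lies in transitivity.

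For transitivity, suppose $(x,y)\in S$ and $(y,z)\in T$ with $S,T\in\corr X$; the task is to produce some $U\in\corr X$ with $(x,z)\in U$. The naive strategy is to extract an irreducible component of the composition $T\circ S\subseteq X^2$ through $(x,z)$ which is a correspondence. But $T\circ S$ need not be irreducible, and a priori $(x,z)$ could lie only in a low-dimensional component of it. To get around this, I would work upstairs in $X^3$ with the fibre product
\[
\Gamma := \big\{(a,c,b)\in X^3 : (a,c)\in S,\ (c,b)\in T\big\},
\]
which is a closed analytic subset containing the point $(x,y,z)$. The key analytic claim is that $\Gamma$ has pure dimension $\dim X$. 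This is the hard step, and I would deduce it from the standard local dimension inequality for intersections of analytic subsets of a smooth ambient space: $\Gamma$ is the intersection in $X^3$ of the preimages of $S$ and $T$ under the two natural projections $X^3\to X^2$, each of pure dimension $2\dim X$, so every irreducible component of $\Gamma$ has dimension at least $2\cdot 2\dim X - 3\dim X = \dim X$; the reverse inequality is clear because any coordinate projection $\Gamma\to X$ has finite fibres.

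Given pure-dimensionality, let $\Gamma_0$ be an irreducible component of $\Gamma$ through $(x,y,z)$ (necessarily of dimension $\dim X$) and let $U$ be its image under $\pi\colon(a,c,b)\mapsto(a,b)$. By Remmert's proper mapping theorem (using compactness of $X$), $U$ is an irreducible analytic subset of $X^2$, and since $\pi|_\Gamma$ has finite fibres, $\dim U=\dim\Gamma_0=\dim X$. By Lemma~\ref{sm}(a), $U$ is then $X^2$, a slice, or a finite-to-finite correspondence; the first two are ruled out because $U\subseteq T\circ S$ and every fibre of $T\circ S$ over either coordinate is a finite union of finite sets. Hence $U\in\corr X$ and $(x,z)\in U$, completing the argument. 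The principal obstacle, as noted, is the pure-dimensionality of $\Gamma$, which rests on the analytic intersection dimension inequality.
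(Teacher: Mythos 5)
Your proof is correct and follows essentially the same route as the paper: reflexivity and symmetry via $\Delta$ and $S^{-1}$, and transitivity by forming the fibre product in the smooth ambient $X^3$, bounding the dimension of its components below by $2\cdot 2\dim X-3\dim X=\dim X$ via the analytic intersection inequality, and pushing forward by the finite-to-one projection to $X^2$. The paper merely packages this as the $n=2$ special case of Lemma~\ref{composedgencorr} (where, for general $n$, a normalisation of $\pi(A_1)$ is needed to have a smooth ambient space; for $n=2$ that space is just $X^3$, exactly as in your argument).
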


\begin{proof}
Reflexivity is by the fact that $\Delta\in\corr X$ and symmetry is by the fact that if $S\in \corr X$ then $S^{-1}\in \corr X$.
Finally, while it is not necessarily the case that $T\circ S\in\corr X$ whenever $S,T\in\corr X$, the following lemma shows that the irreducible components of $T\circ S$ are -- and that suffices for transitivity.
\end{proof}

\begin{lemma}
\label{composedgencorr}
Suppose $n>1$ and $A_1,A_2\subseteq X^n$ are generalised correspondences such that $\pi(A_1)=\pi(A_2)$, where $\pi:X^n\to X^{n-1}$ is the projection onto the last $n-1$ co-ordinates.
Let
$$B:=\big\{(a_1,a_2)\in X^2 : \text{ for some $z\in X^{n-1}$, $(a_1,z)\in A_1$ and $(a_2,z)\in A_2$}\big\}$$
Then every irreducible component of $B$ is in $\corr X$.
Moreover, unless $A_1=A_2$, none of these components is $\Delta$.

In particular, if $S,T\in\corr X$, then every irreducible component of $T\circ S$ is in $\corr X$, and unless $S=T^{-1}$ none of these components is $\Delta$.
\end{lemma}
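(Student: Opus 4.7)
The plan is to realize $B$ as the image of the fibre product
$$W := A_1 \times_Y A_2 = \{(a_1, a_2, z) \in X^2 \times X^{n-1} : (a_1, z) \in A_1,\ (a_2, z) \in A_2\},$$
where $Y := \pi(A_1) = \pi(A_2)$, under the natural projection $W \to X^2$, $(a_1, a_2, z) \mapsto (a_1, a_2)$, and to transfer structural information from $W$ to $B$. The key preliminary is a finite-fibre lemma: for any generalised correspondence $A \subseteq X^n$, every coordinate projection $A \to X$ has all fibres finite. I would prove this by observing that each pairwise coordinate projection $A \to X^2$ is irreducible analytic, surjects onto $X$ in each factor (since $A$ does), and has dimension at most $\dim X$, so Lemma~\ref{sm}(a) forces it to lie in $\corr X$; Lemma~\ref{mto1} then makes its projections to $X$ everywhere finite-to-one, whence each fibre of $A \to X$, being finite in every other coordinate, is itself finite.

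Applied to $A_1$ and $A_2$, this gives that $\pi: A_i \to Y$ is proper with finite fibres, so $W \to A_1$ (the base change of $\pi|_{A_2}$ by $\pi|_{A_1}$) is also proper finite. Using the continuity of the finite covers $A_i \to Y$ to lift any $z'$ near a given $z^0 \in Y$ to nearby points of both $A_1$ and $A_2$, we see that $W$ has no isolated points and so is pure of dimension $\dim Y = \dim X$. Hence every irreducible component $W'$ of $W$ has dimension $\dim X$ and surjects onto the irreducible $A_1$ (whence onto $X$ in the first coordinate) and onto $A_2$ (whence onto $X$ in the second). Each irreducible component $B'$ of $B$ is therefore the image of some such $W'$, surjects onto $X$ in each coordinate, and satisfies $\dim B' \leq \dim X$; by Lemma~\ref{sm}(a) it must lie in $\corr X$.

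For the diagonal, suppose $\Delta \subseteq B$. Then every $a \in X$ admits some $z$ with $(a, z) \in A_1 \cap A_2$, so $A_1 \cap A_2$ surjects onto $X$ in the first coordinate and has dimension at least $\dim X$. Since $A_1$ is irreducible of dimension $\dim X$, this forces $A_1 \cap A_2 = A_1 = A_2$. The ``in particular'' clause follows by taking $n = 2$, $A_1 = S$, $A_2 = T^{-1}$: the defining condition for $B$ then becomes $\exists z \, ((a_1, z) \in S \wedge (z, a_2) \in T)$, that is, $B = T \circ S$, and the equality $A_1 = A_2$ reads $S = T^{-1}$.

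The main obstacle is establishing the finite-fibre property for generalised correspondences, since this is what enables the fibre-product machinery; once it is in place, the purity of $W$, the surjectivity of its components onto $A_1$ and $A_2$, and the final appeal to Lemma~\ref{sm}(a) are routine.
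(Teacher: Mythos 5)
Your reduction of the lemma to the statement that the fibre product $W=A_1\times_Y A_2$ has no small-dimensional components is essentially the paper's strategy, and your preliminary finite-fibre lemma is fine (it is Lemma~\ref{gencorr=comp} combined with Lemma~\ref{mto1}). But the step on which everything rests --- ``using the continuity of the finite covers $A_i\to Y$ to lift any $z'$ near a given $z^0\in Y$ to nearby points of both $A_1$ and $A_2$'' --- is exactly the assertion that the finite surjections $\pi|_{A_i}\colon A_i\to Y$ are \emph{open} maps, and this is not automatic. A finite surjective holomorphic map between irreducible complex spaces of the same dimension need not be open when the target fails to be locally irreducible: the normalisation $\nu\colon C'\to C$ of a nodal curve is the standard example, and the fibre product $C'\times_C C'$ really does have isolated points, namely $(p,q)$ and $(q,p)$ where $p,q$ are the two preimages of the node. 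So if $Y$ had a point with several local branches, your lifting argument would break down precisely there, and $W$ could acquire isolated points. It happens that $Y$ is unibranch in this setting --- the proof of Lemma~\ref{mto1} extends to show that a generalised correspondence has bijective normalisation --- but that is a genuine fact needing Fact~\ref{norm} (purity of branch), and you never establish it. The paper avoids the issue by passing to the normalisation $A'$ of $Y$, which by Fact~\ref{norm} is an unramified cover of $X$ and hence \emph{smooth}, and then bounding the dimension of every component of $W$ from below by the intersection inequality $\dim V_1+\dim V_2-\dim M$ valid in a smooth ambient space $M=X\times A'\times X$; the surjectivity of $A'\to Y$ still gives $B=q(W)$.

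A secondary, fixable point: even granting the lifting, ``$W$ has no isolated points, so $W$ is pure of dimension $\dim X$'' is a non sequitur once $\dim X\geq 2$ (which is the standing assumption in this section); absence of isolated points only excludes zero-dimensional components, not components of intermediate dimension. You can repair this by noting that each component of $B$ is the image of a component of $W$ under the finite map $q$, hence an irreducible analytic subset of $X^2$ of the same positive dimension, and Lemma~\ref{sm}(a) leaves no room for irreducible analytic subsets of $X^2$ of dimension strictly between $0$ and $\dim X$. The ``moreover'' and ``in particular'' clauses of your proposal match the paper's argument exactly and are correct.
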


\begin{proof}
The ``in particular'' clause follows by letting $n=2$, $A_1=S$, and $A_2=T^{-1}$.

For the ``moreover'' clause, observe that if  $\Delta\subseteq B$, then $A_1\cap A_2$ projects onto $X$ in the first co-ordinate, so that $\dim(A_1\cap A_2)=\dim X$, and hence $A_1=A_2$.

Now let us prove that main statement.
It is not hard to see that $B\subseteq X^2$ is a complex-analytic set that projects onto $X$ in each co-ordinate in a finite-to-one manner.
It follows from Lemma~\ref{sm}(a) that each irreducible component of $B$ must be either a singleton or a finite-to-finite correspondence.
We need to rule out the possibility of zero-dimensional components.

Let $A=\pi(A_1)=\pi(A_2)\subseteq X^{n-1}$, and let $\rho:A'\to A$ be a normalisation of $A$.
Note that $A$ is again a generalised correspondence, and in particular projects onto $X$ in a finite-to-one manner in each co-ordinate.
Then, by Fact~\ref{norm}, composing $\rho$ with any co-ordinate projection shows that $A'$ is a unramified cover of $X$.
In particular, $A'$ is smooth and of dimension $\dim X$.

Let $p_1:X\times A'\times X\to X^n$ be the holomorphic surjection $(x_1,z,x_2)\mapsto \big(x_1,\rho(z)\big)$, let $p_2:X\times A'\times X\to X^n$ be $(x_1,z,x_2)\mapsto \big(x_2,\rho(z)\big)$
and let
$q:X\times A'\times X\to X^2$ be $(x_1,z,x_2)\mapsto (x_1,x_2)$.
Set $W:=p_1^{-1}(A_1)\cap p_2^{-1}(A_2)\subseteq X\times A'\times X$.
By the surjectivity of $\rho$, $B=q(W)$.
By the smoothness of $X\times A'\times X$ we know that each irreducible component of $W$ is of dimension at least $2\dim X+2\dim X-3\dim X=\dim X$.
But since $q|_W$ is also finite-to-one, every irreducible component of $B$ is at least of dimension $\dim X$, as desired.
\end{proof}

Note that the intersection of distinct generalised correspondences, $A_1,A_2\subseteq X^n$, must be finite.
Indeed, as the first co-ordinate projection of $A_1$ is finite-to-one onto $X$, if $A_1\cap A_2$ were infinite, then its projection, being complex-analytic, would be all of $X$ by strong minimality.
Hence $\dim(A_1\cap A_2)\geq\dim X$.
Irreducibility would then imply that $A_1=A_2$.
So such intersections will always land in the algebraic closure of the empty set in $(X,\mathcal L_{\gc})$.
Hence the following lemma is a necessary part of showing that that algebraic closure is $X_0$.

\begin{lemma}
\label{gencorrint}
Suppose $A_1,A_2\subseteq X^n$ are distinct generalised correspondences.
Then $A_1\cap A_2\subseteq X_0^n$.
\end{lemma}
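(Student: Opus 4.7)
The plan is to prove the lemma by induction on $n$. Fix a point $(c_1, \ldots, c_n) \in A_1 \cap A_2$; the goal is to show $c_k \in X_0$ for every $k$.

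For each $k \in \{1,\ldots,n\}$, let $p_k : X^n \to X^{n-1}$ denote the projection omitting the $k$th coordinate, and set $B_j^{(k)} := p_k(A_j)$ for $j=1,2$. A preliminary observation, which I would verify first, is that each $B_j^{(k)}$ is itself a generalised correspondence in $X^{n-1}$ (or equals $X$ in the case $n=2$): it is irreducible as the image of an irreducible set, it surjects onto $X$ in each remaining coordinate because $A_j$ does, and its dimension is exactly $\dim X$. Indeed, if its dimension were smaller, a generic fibre of $p_k|_{A_j}$ would be positive-dimensional in the $k$th coordinate, hence all of $X$ by strong minimality, forcing $\dim A_j > \dim X$.

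For each fixed $k$ the argument then splits into two subcases. If $B_1^{(k)} = B_2^{(k)}$, then after a coordinate permutation moving the $k$th slot into first position, Lemma~\ref{composedgencorr} applies to $A_1$ and $A_2$ and produces a set $B \subseteq X^2$ whose irreducible components all lie in $\corr X \setminus \{\Delta\}$, the latter because $A_1 \neq A_2$. Setting $z := p_k(c_1,\ldots,c_n)$, the point $(c_k, c_k)$ lies in $B$, and hence $c_k \in X_{00} \subseteq X_0$. If instead $B_1^{(k)} \neq B_2^{(k)}$, then $B_1^{(k)}$ and $B_2^{(k)}$ are distinct generalised correspondences in $X^{n-1}$ with $p_k(c_1,\ldots,c_n)$ in their intersection; the inductive hypothesis supplies $c_i \in X_0$ for every $i \neq k$. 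Picking any such $i$, the projection of $A_1$ onto coordinates $i$ and $k$ is a member of $\corr X$ by Lemma~\ref{gencorr=comp} and contains $(c_i, c_k)$, so $c_k \in \orb(c_i)$; since $c_i \in X_0$ and $X_0$ is a union of $\orb$-classes by Proposition~\ref{orbequiv}, this gives $c_k \in X_0$.

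The base case $n=2$ is automatic, since there $B_j^{(k)} = X$ for all $j,k$ and only the first subcase ever applies. The only step that requires real care is the preliminary verification that $B_j^{(k)}$ is a generalised correspondence of the correct dimension; once that and the binary case split are in place, the proof is essentially a bookkeeping exercise combining Lemma~\ref{composedgencorr}, the inductive hypothesis, and the orbit-equivalence supplied by Proposition~\ref{orbequiv}.
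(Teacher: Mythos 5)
Your proof is correct and uses essentially the same ingredients as the paper's: induction on $n$ via coordinate projections, Lemma~\ref{composedgencorr} when the projections omitting a coordinate coincide, and the orbit equivalence relation (Proposition~\ref{orbequiv} together with Lemma~\ref{gencorr=comp}) to transfer membership in $X_0$ from one coordinate to another. The only difference is organizational: you run the dichotomy separately for each coordinate $k$, whereas the paper splits globally into the case where all projections differ (pure induction) and the case where some projection agrees (Lemma~\ref{composedgencorr} for that coordinate, then orbits for the rest); both versions are sound.
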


\begin{proof}
By induction on $n>0$.
The case of $n=1$ is vacuous since then $A_i=X$ for $i=1,2$.
Suppose $n>1$.
Let $\pi:X^n\to X^{n-1}$ be some co-ordinate projection.
Note that $\pi(A_1)$ and $\pi(A_2)$ are generalised correspondences, so that by induction if $\pi(A_1)\neq\pi(A_2)$ then $\pi(A_1)\cap\pi(A_2)\subseteq X_0^{n-1}$.
If this were the case for all co-ordinate projections to $X^{n-1}$, then we would be done. So we may assume that for some $\pi$, $\pi(A_1)=\pi(A_2)$.
After permuting co-ordinates we may in addition assume that $\pi$ is the projection onto the last $n-1$ co-ordinates.
We are thus in a situation to which Lemma~\ref{composedgencorr} applies.
Now let $(a,z)\in A_1\cap A_2$, where $a\in X$ and $z=(z_2,\dots,z_n)\in X^{n-1}$.
Then $(a,a)$ is in the set $B$ of Lemma~\ref{composedgencorr}, and so by that lemma, $(a,a)\in S$ for some $\Delta\neq S\in\corr X$.
Hence $a\in X_{00}$.
By Lemma~\ref{gencorr=comp}, for all $i=2,\dots,n$, $(a,z_i)\in T$ for some $T\in \corr X$.
It follows that each $z_i\in\orb(a)$, and so $z_i\in X_0$, as desired.
\end{proof}

The following is the key step in proving that any two elements outside $X_0$ have the same $\mathcal L_{\gc}$-type.

\begin{proposition}
\label{orbperm}
Suppose $\corr X$ is countable.
For all $a,b\in X\setminus X_0$, there is a bijection $\phi:\orb(a)\to\orb(b)$ such that $\phi(a)=b$, and for any generalised correspondence $A\subseteq X^n$ and $x_1,\dots,x_n\in\orb(a)$,
$(x_1,\dots,x_n)\in A$ if and only if $\big(\phi(x_1),\dots,\phi(x_n)\big)\in A$.
\end{proposition}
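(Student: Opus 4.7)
The plan is a back-and-forth construction resting on two combinatorial facts.  The first is a \emph{uniqueness} observation: if $(z_1,\dots,z_n)\in(X\setminus X_0)^n$ then at most one generalised correspondence $A\subseteq X^n$ contains it, since two such correspondences would intersect outside $X_0^n$ and contradict Lemma~\ref{gencorrint}.  The second is \emph{constancy of fibre size}: if $A\subseteq X^{k+1}$ is a generalised correspondence and $A'\subseteq X^k$ is its projection onto some $k$ of the coordinates, then the induced finite surjection $\pi\colon A\to A'$ has constant fibre size $d$.  One sees this by normalising both sides: by Fact~\ref{norm} applied to the first-coordinate projections, $\widetilde A\to A$ and $\widetilde{A'}\to A'$ are bijections onto smooth unramified $X$-covers (as in Lemma~\ref{mto1}), while the induced finite map $\widetilde A\to\widetilde{A'}$ between smooth equidimensional varieties has branch locus that is either empty or of pure codimension one by purity; the second alternative is ruled out because this branch locus would have to project into the empty branch locus of $\widetilde A\to X$.

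With these in hand, enumerate the countable orbits $\orb(a)$ and $\orb(b)$ and inductively build finite partial bijections $\phi_n\colon P_n\to Q_n$ with $\phi_n(a)=b$ that preserve membership in every generalised correspondence.  At a forth step, with $P_n=(p_1,\dots,p_k)$, $Q_n=(q_1,\dots,q_k)=(\phi_n(p_1),\dots,\phi_n(p_k))$, and $x\in\orb(a)\setminus P_n$, let $A$ be the unique generalised correspondence containing $(p_1,\dots,p_k,x)$ and let $A'\subseteq X^k$ be its projection onto the first $k$ coordinates.  Then $A'$ is the unique generalised correspondence containing $(p_1,\dots,p_k)$, which by the inductive hypothesis also contains $(q_1,\dots,q_k)$.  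The $\pi$-fibres over these two tuples have common size $d$, and $\phi_n$ restricts to a bijection between the part of the $P$-fibre lying in $P_n$ and the part of the $Q$-fibre lying in $Q_n$; since $x$ lies in the former but not in $P_n$, a count produces $y$ in the $Q$-fibre but not in $Q_n$.  Setting $\phi_{n+1}(x):=y$, one has $y\in\orb(b)$ because $(b,y)$, a coordinate projection of $(q_1,\dots,q_k,y)\in A$, lies in an element of $\corr X$ by Lemma~\ref{gencorr=comp}.  Preservation of membership on new tuples involving $x$ follows from uniqueness: every such tuple $\zeta$ lies in at most one generalised correspondence, and the unique one is obtained as the corresponding projection (possibly composed with a diagonal embedding) of $A$, so from $(q_1,\dots,q_k,y)\in A$ the image $\phi_{n+1}(\zeta)$ lies in that same correspondence.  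The back step is symmetric, and $\phi:=\bigcup_n\phi_n$ is the desired bijection.

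The main obstacle is the existence of the generalised correspondence $A$ containing $(p_1,\dots,p_k,x)$: Lemma~\ref{sm}(b) writes the smallest irreducible complex-analytic subset containing the tuple, after permutation, as a product of generalised correspondences and singletons, and one must rule out singleton factors.  Since the singletons appearing in this decomposition take values in $\acl(\emptyset)\cap X$ (the irreducible component being $\acl(\emptyset)$-definable), this amounts to knowing no coordinate of $(p_1,\dots,p_k,x)$ lies in $\acl(\emptyset)\cap X$, and so reduces to the inclusion $\acl(\emptyset)\cap X\subseteq X_0$.  Arranging the exposition so that this inclusion is available here---given that Proposition~\ref{orbperm} is itself described as the key step toward establishing the $\mathcal L_{\gc}$-type equality of elements of $X\setminus X_0$, from which $\acl(\emptyset)\cap X=X_0$ ultimately follows---is the delicate logical point the proof must address.
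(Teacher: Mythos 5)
Your back-and-forth scheme is plausibly organized, and two of its three ingredients are sound: the uniqueness of a generalised correspondence through a tuple in $(X\setminus X_0)^n$ does follow from Lemma~\ref{gencorrint}, and your constant-fibre-size argument for $A\to A'$ (lifting to normalisations, which are unramified covers of $X$ by Fact~\ref{norm}, so that $\widetilde A\to\widetilde{A'}$ is itself an unramified cover of a connected space) is correct and makes the counting at each forth step work. But the third ingredient --- the \emph{existence} of a generalised correspondence containing the standard tuple $(p_1,\dots,p_k,x)$ --- is a genuine gap, and the repair you sketch does not work. The smallest irreducible complex-analytic subset of $X^{k+1}$ containing a tuple of actual points of $X$ is the singleton $\{(p_1,\dots,p_k,x)\}$: singletons are complex-analytic, so Lemma~\ref{sm}(b) applied to this locus returns a product of singletons and says nothing. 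Your claim that ``the singletons appearing in this decomposition take values in $\acl(\emptyset)\cap X$'' is only valid when the ambient irreducible set is $\acl(\emptyset)$-definable, which a point-locus is not; so even granting $\acl(\emptyset)\cap X\subseteq X_0$, no generalised correspondence through the tuple is produced. And as you yourself note, that inclusion is in any case unavailable here: in the paper it is the content of Proposition~\ref{aclx0}, whose proof \emph{uses} Proposition~\ref{orbperm}, so the reduction is circular.

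This existence problem is exactly what the paper's proof is engineered to avoid. It fixes a generic point $c$ in a saturated elementary extension, so that $A_r:=\loc(c,c_1,\dots,c_{\ell_r})$ is automatically positive-dimensional and is shown (Claim~\ref{arcorr}) to be a generalised correspondence; the standard tuple over $a$ is then obtained \emph{inside} $A_r$ by surjectivity of the coordinate projections $A_{r+1}\to A_r\to X$, rather than by first taking the tuple and hunting for a correspondence through it. If you want to keep your direct back-and-forth, the gap is fillable, but by a different argument than the one you give: since any two elements of $\orb(a)$ lie on some $S\in\corr X$ (via Lemma~\ref{composedgencorr}), the tuple lies in $B=\bigcap_{j=2}^{k+1}\pi_{1j}^{-1}(S_{1j})$, and the dimension inequality for intersections of analytic sets in the smooth ambient space $X^{k+1}$ forces every irreducible component of $B$ to have dimension exactly $\dim X$ and to surject onto $X$ in each coordinate, hence to be a generalised correspondence containing the tuple. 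That step, or something equivalent, must be supplied before the induction can start.
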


\begin{proof}
Let $(S_i:i<\omega)$ be an enumeration of $\corr X\setminus\{\Delta\}$.

Let $\mathcal A'$ be a sufficiently saturated elementary extension of $\mathcal A$,and denote by $X(\mathcal A')$ the interpretation of $X$ in the extension.
Fix a generic point of $X$ in $\mathcal A'$ -- that is a point $c\in X(\mathcal A')\setminus X$.
Let
$$\orb(c):=\{x\in X(\mathcal A'):(x,c)\in S(\mathcal A')\text{ for some }S\in\corr X\}$$
We will show that there is a bijection $\phi:\orb(a)\to\orb(c)$ such that $\phi(a)=c$, and for any generalised correspondence $A\subseteq X^n$ and $x_1,\dots,x_n\in\orb(a)$,
$(x_1,\dots,x_n)\in A$ if and only if $\big(\phi(x_1),\dots,\phi(x_n)\big)\in A(\mathcal A')$.
Applying this to $b$ also, will prove the proposition.

Let $(c_j:1\leq j<\omega)$ be an enumeration of $\displaystyle\bigcup_{i<\omega} S_i(\mathcal A')_c$, where $S_i(\mathcal A')_c:=\{x\in X(\mathcal A'):(x,c)\in S_i(\mathcal A')\}$.
Moreover, assume the enumeration is coherent in the sense that for some increasing sequence $\ell_0<\ell_1<\cdots$, $(c_1,\dots,c_{\ell_r})$ is an enumeration of $\displaystyle\bigcup_{i\leq r} S_i(\mathcal A')_c$.
For each $r<\omega$ let $A_r:=\locus(c,c_1,\dots,c_{\ell_r})\subseteq X^{\ell_r+1}$.

\begin{claim}
\label{arcorr}
Each $A_r$ is a generalised correspondence.
\end{claim}
\begin{proof}[Proof of~\ref{arcorr}]
That $A_r$ is irreducible and complex-analytic is by definition.
Note that for each $j$, $c\in\acl(c_j)$ since $(c_j,c)\in S(\mathcal A')$ for some $S\in\corr X$.
Hence the genericity of $c$ implies the genericity of each $c_j$.
It follows that the co-ordinate projections of $A_r$ are all onto $X$.
Finally, it is $\dim(X)$-dimensional because each $c_j\in\acl(c)$ and so the first co-ordinate projection is generically finite-to-one.
\end{proof}

The coherence of the enumeration yields a direct system of surjective maps $X\leftarrow A_0\leftarrow A_1\leftarrow\cdots$ given by the natural initial segment co-ordinate projections.
We can therefore find $(a_j:1\leq j<\omega)$ such that for all $r$, $(a,a_1,\dots,a_{\ell_r})\in A_r$.

\begin{claim}
\label{enumorb}
$(a_j:1\leq j<\omega)$ enumerates $\orb(a)\setminus\{a\}$.
\end{claim}

\begin{proof}[Proof of~\ref{enumorb}]
Note that $\orb(a)\setminus\{a\}$ is the increasing union of the sets $\displaystyle \bigcup_{i=0}^r(S_i)_a$, as $r$ goes to infinity. (Note that $a$ is not in any of the latter as $a\notin X_0$.)
Hence it suffices to show, fixing $r$, that $(a_1,\dots,a_{\ell_r})$ enumerates $\displaystyle \bigcup_{i=0}^r(S_i)_a$.

By construction, for each $j\leq\ell_r$, $(c_j,c)\in S_i(\mathcal A')$ for some $i\leq r$.
By genericity of $c$, it follows that the projection to the $(1+j,1)$-co-ordinate of $A_r=\locus(c,c_1,\dots,c_{\ell_r})$ has infinite intersection with $S_i$.
But as $A_r$ is a generalised correspondence, this projection is itself a finite-to-finite correspondence (see Lemma~\ref{gencorr=comp}), so that it must equal $S_i$.
So $(a_j,a)\in S_i$.
It suffices to show therefore that $\ell_r$ is the cardinality of $\displaystyle \bigcup_{i=0}^r(S_i)_a$.
By construction $\ell_r$ is the cardinality of $\displaystyle \bigcup_{i=0}^rS_i(\mathcal A')_c$.
But note that for $i\neq j$, $S_i\cap S_j$ is finite and so $S_i(\mathcal A')_c\cap S_j(\mathcal A')_c=\emptyset$.
Hence,
\begin{eqnarray*}
\ell_r
&=&
\big|\bigcup_{i=0}^rS_i(\mathcal A')_c\big|\\
&=&
\sum_{i=0}^r|S_i(\mathcal A')_c|\\
&=&
\sum_{i=0}^rm_i
\end{eqnarray*}
where $m_i$ is the cardinality of the fibres of the second co-ordinate projection on $S_i$ -- which by Lemma~\ref{mto1} is a constant.
On the other hand, for $i\neq j$, $S_i\cap S_j\subseteq X_0^2$ by Lemma~\ref{gencorrint}, and so as $a\notin X_0$, $(S_i)_a\cap(S_j)_a=\emptyset$ also.
Hence
$$\big|\bigcup_{i=0}^r(S_i)_a\big|=\sum_{i=0}^r|(S_i)_a|=\sum_{i=0}^rm_i$$
as well.
\end{proof}

Since $(c_0:=c,c_j:1\leq j<\omega)$ enumerates $\orb(c)$ by construction, and now we know that $(a_0:=a,a_j:1\leq j<\omega)$ enumerates $\orb(a)$, all that remains to be proved is that if $A\subseteq X^n$ is any generalised correspondence, and $i_1,\dots,i_n<\omega$ are arbitrary, then $(a_{i_1},\dots,a_{i_n})\in A$ if and only if $(c_{i_1},\dots,c_{i_n})\in A(\mathcal A')$.
Fix $r$ so that $i_1,\dots,i_n$ are all $\leq \ell_r$.
For the right-to-left direction, note that $(c_{i_1},\dots,c_{i_n})\in A(\mathcal A')$ implies that the $(i_1,\dots,i_n)$-co-ordinate projection of $A_r$ has infinite intersection with $A$, and so as both are generalised correspondences, must equal $A$.
Hence, $(a_{i_1},\dots,a_{i_n})\in A$.
Conversely, suppose $(c_{i_1},\dots,c_{i_n})\notin A(\mathcal A')$, $(a_{i_1},\dots,a_{i_n})\in A$, and seek a contradiction.
Then the $(i_1,\dots,i_n)$-co-ordinate projection of $A_r$, say $B\subseteq X^n$, is a generalised correspondence, different from $A$.
By Lemma~\ref{gencorrint}, $A\cap B\subseteq X_0^n$, so that $a_{i_1}\in X_0$.
Hence $a\in\orb(a_{i_1})\subseteq X_0$.
This contradiction proves that $(a_{i_1},\dots,a_{i_n})\notin A$, and thus completes the proof of Proposition~\ref{orbperm}.
\end{proof}

Putting~\ref{orbequiv} and~\ref{orbperm} together, we obtain the desired characterisation of algebraic closure in $(X,\mathcal L_{\gc})$.

\begin{proposition}
\label{aclx0}
If $\corr X$ is countable, then $\acl(\emptyset)\cap X=X_0$ in $(X,\mathcal L_{\gc})$.
\end{proposition}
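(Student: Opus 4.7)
The plan is to prove the two inclusions separately, with the forward inclusion being essentially routine and the converse being the main content.

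For $X_0 \subseteq \acl(\emptyset) \cap X$: Given $a \in X_0$, by definition $a \in \orb(c)$ for some $c \in X_{00}$. I would first show $c \in \acl(\emptyset)$: there exists $U \in \corr X$ with $U \neq \Delta$ and $(c,c) \in U$, and the set $\{x \in X : U(x,x)\}$ is $\emptyset$-definable in $\mathcal{L}_{\gc}$. It is finite, being the first projection of $U \cap \Delta$, because the intersection of two distinct generalised correspondences is finite (as observed immediately before Lemma~\ref{gencorrint}). Then, since $a \in \orb(c)$, there is $T \in \corr X$ with $a \in T_c$; the fibre $T_c$ is finite by the finite-to-finite property and is $c$-definable, so $a \in \acl(c) \subseteq \acl(\emptyset)$.

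For $\acl(\emptyset) \cap X \subseteq X_0$: I would show that every $a \in X \setminus X_0$ has uncountably many $\mathcal{L}_{\gc}$-conjugates in $X$, and so cannot lie in $\acl(\emptyset)$. Fix $a, b \in X \setminus X_0$. Proposition~\ref{orbperm} provides a bijection $\phi : \orb(a) \to \orb(b)$ with $\phi(a) = b$ that preserves generalised correspondences. I would extend $\phi$ to a map $\sigma : X \to X$ by declaring $\sigma = \phi$ on $\orb(a)$, $\sigma = \phi^{-1}$ on $\orb(b)$ (with the obvious modification when $\orb(a) = \orb(b)$), and $\sigma = \id$ on every other $\orb$-class. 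This is well-defined by Proposition~\ref{orbequiv}.

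The main technical point is verifying that $\sigma$ is an $\mathcal{L}_{\gc}$-automorphism, that is, preserves every generalised correspondence $A \subseteq X^n$. The essential observation is that any such $A$ is contained in $\bigcup_O O^n$, where $O$ ranges over $\orb$-equivalence classes: by Lemma~\ref{gencorr=comp} each projection of $A$ to two coordinates lies in $\corr X$, so any tuple in $A$ has all of its coordinates in a single orbit. Thus membership in $A$ can be checked orbit by orbit, and on each orbit $\sigma$ is either the identity or one of $\phi, \phi^{-1}$, which preserve $A$ by Proposition~\ref{orbperm}. Finally, $X_0$ is countable --- $X_{00}$ is a countable union of finite fixed-point sets $\operatorname{Fix}(U)$ for $U \in \corr X \setminus \{\Delta\}$, and each orbit is a countable union of finite fibres --- whereas $X$ is uncountable since $\dim X \geq 1$. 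Hence $X \setminus X_0$ is uncountable, and the automorphisms $\sigma$ constructed above exhibit uncountably many conjugates of $a$, forcing $a \notin \acl(\emptyset)$. The step I expect to be the main obstacle is the orbit-by-orbit verification that $\sigma$ preserves generalised correspondences, which depends essentially on the fact that tuples in a generalised correspondence respect the $\orb$-partition.
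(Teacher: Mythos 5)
Your proposal is correct and follows essentially the same route as the paper: the forward inclusion via finiteness of $U\cap\Delta$ and of the fibres $T_c$, and the reverse inclusion via the permutation $\sigma$ built from the bijection of Proposition~\ref{orbperm} on the orbit partition of Proposition~\ref{orbequiv}, using that every tuple in a generalised correspondence has all coordinates in a single orbit. Your explicit cardinality count showing $X\setminus X_0$ is uncountable (so that $a$ has infinitely many conjugates) is a small point the paper leaves implicit, but it is not a different argument.
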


\begin{proof}
For the right-to-left containment, suppose $a\in X_0$.
Then by definition $a\in\orb(b)$ for some $b\in X_{00}$.
But then $(b,b)\in S\cap\Delta$ for some $\Delta\neq S\in\corr X$.
As this intersection must be finite, $b\in\acl(\emptyset)$.
Now $a\in\orb(b)$ implies that $a\in\acl(b)\subseteq\acl(\emptyset)$, as desired.

To prove the left-to-right direction we take $a,b\in X\setminus X_0$ and show that they have the same type in $(X,\mathcal L_{\gc})$.
We do this by exhibiting an automorphism of $(X,\mathcal L_{\gc})$ taking $a$ to $b$.
By Proposition~\ref{orbequiv}, $X$ is partitioned into disjoint orbits.
Let $\phi:\orb(a)\to\orb(b)$ be the bijection given by Proposition~\ref{orbperm}.
We define $\sigma:X\to X$ to be the permutation that is the identity on every orbit except $\orb(a)$ and $\orb(b)$, $\sigma|_{\orb(a)}=\phi$, and $\sigma|_{\orb(b)}=\phi^{-1}$.
To show that $\sigma$ is an $\mathcal L_{\gc}$-automorphism we need to show that it preserves all the generalised correspondences.
Let $A\subseteq X^n$ be a generalised correspondence, and $x=(x_1,\dots,x_n)\in A$.
Then all the $x_i$'s are in the same orbit.
If that orbit is $\orb(a)$ or $\orb(b)$ then Proposition~\ref{orbperm} implies that $\sigma(x)\in A$.
If not, then $\sigma(x)=x\in A$.
\end{proof}

Now the characterisation of essential $\aleph_0$-categoricity goes through:

\begin{theorem}
\label{sm-finitecorr}
Suppose $X$ is a strongly minimal compact complex manifold.
Then the following are equivalent.
\begin{itemize}
\item[(i)]
$\corr X$ is finite.
\item[(ii)]
$X$ is essentially saturated and $(X,\mathcal{L}_{\gc})$ is $\aleph_0$-categorical.
\item[(iii)]
$X$ is essentially saturated and $(X,\mathcal L_{\douady})$ is $\aleph_0$-categorical.
\item[(iv)]
$X$ is essentially $\aleph_0$-categorical.
\end{itemize}
\end{theorem}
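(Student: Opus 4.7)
My plan is to adapt the proof of Theorem~\ref{scsm-finiteaut} to the general setting, substituting finite-to-finite correspondences for automorphisms. I will establish the cycle (i) $\Rightarrow$ (ii) $\Rightarrow$ (iii) $\Rightarrow$ (iv) $\Rightarrow$ (i). Two of these require almost no new work: (iii) $\Rightarrow$ (iv) is immediate because $\mathcal{L}_{\douady}$ is itself a full countable language, and (ii) $\Rightarrow$ (iii) follows from Fact~\ref{cat-triv} (which forces trivial geometry) together with Proposition~\ref{sm-ctblecorr}, which asserts that $\mathcal{L}_{\gc}$ and $\mathcal{L}_{\douady}$ are mutually $\emptyset$-interdefinable in the trivial case, so $\aleph_0$-categoricity transfers.

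For (i) $\Rightarrow$ (ii), Proposition~\ref{sm-ctblecorr} gives essential saturation and makes $\mathcal{L}_{\gc}$ a full countable analytic language. By Lemma~\ref{qefcl} it then suffices to show that for each $n$ there are only finitely many $\acl(\emptyset)$-definable irreducible complex-analytic subsets of $X^n$. For $n=1$, Proposition~\ref{aclx0} identifies $\acl(\emptyset)\cap X$ with $X_0$, which is finite when $\corr X$ is: $X_{00}$ is a finite union of the finite intersections $U\cap\Delta$ as $\Delta\neq U$ ranges over $\corr X$, and each $\orb(a)$ is a finite union of the finite fibers $S_a$ for $S\in\corr X$. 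For $n=2$, Lemma~\ref{sm}(a) confines the possibilities to $X^2$, points in $X_0$, slices over $X_0$, and members of $\corr X$. For $n>2$, Lemma~\ref{sm}(b) writes $A$ as a product (after a coordinate permutation) of generalised correspondences and singletons, while Lemma~\ref{gencorr=comp} bounds the number of generalised correspondences in each $X^n$ in terms of $|\corr X|$.

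For (iv) $\Rightarrow$ (i), Fact~\ref{cat-triv} gives trivial geometry and Proposition~\ref{sm-ctblecorr} gives that $\corr X$ is countable. The key step is to show that each $S\in\corr X$ is $\acl(\emptyset)$-definable in $(X,\mathcal{L})$. If not, then the $\aut(X,\mathcal{L})$-orbit of the canonical parameter of $S$ is infinite, and since the property on $c$ that $\phi(x,c)$ define an irreducible complex-analytic subset of $X^2$ projecting finitely onto $X$ in each coordinate is $\mathcal{A}(X)$-definable, and hence $\mathcal{L}$-definable by fullness, this orbit yields an infinite definable family of finite-to-finite correspondences, contradicting condition~(ii) of Lemma~\ref{trivial-condition}. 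A standard consequence of $\aleph_0$-categoricity (in our $\omega$-stable setting) is that there are only finitely many $\acl(\emptyset)$-definable subsets of $X^2$; combined with the claim above, this forces $|\corr X|<\infty$.

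The subtlest point is the italicised claim in (iv) $\Rightarrow$ (i): genuinely producing an infinite definable family of \emph{correspondences}, not merely of $\mathcal{L}$-definable subsets of $X^2$. Because $\mathcal{L}$ is not assumed to be analytic, conjugation by $\aut(X,\mathcal{L})$ does not obviously preserve complex-analyticity; one must instead recognise the correspondence property uniformly in the parameter, which is possible because the relevant conditions are $\mathcal{A}(X)$-definable and hence, by fullness, $\mathcal{L}$-definable. With this uniformity in hand, Lemma~\ref{trivial-condition} delivers the desired contradiction.
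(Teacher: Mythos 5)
Your proposal is correct and follows essentially the same route as the paper: the same cycle of implications, with (i)$\Rightarrow$(ii) via Proposition~\ref{sm-ctblecorr}, Proposition~\ref{aclx0}, Lemmas~\ref{sm} and~\ref{gencorr=comp}, and Lemma~\ref{qefcl}, and with (iv)$\Rightarrow$(i) via triviality forcing each correspondence to be $\acl(\emptyset)$-definable. You in fact supply more detail than the paper does on the last step, correctly flagging and handling the point that a non-analytic full language requires one to cut the conjugate family down to genuine correspondences using the $\mathcal{A}(X)$-definability of the correspondence condition.
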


\begin{proof}
The proof is analogous the simply connected case (Theorem~\ref{scsm-finiteaut}).

(i)$\implies$(ii).
Assume $\corr X$ is finite.
First observe that $\acl(\emptyset)\cap X$ is finite.
Indeed, by Proposition~\ref{aclx0}, $\displaystyle \acl(\emptyset)\cap X=X_0=\bigcup_{a\in X_{00}}\orb(a)$.
Since $\corr X$ is finite and the intersection of distinct finite-to-finite correspondences are finite, $X_{00}$ is finite.
But every orbit is also finite.
Hence $X_0$ is finite, as desired.

Also, by Lemma~\ref{gencorr=comp} there are only finitely many generalised correspondences in $X^n$, for each $n>0$.

Now, if $A\subseteq X^n$ is irreducible complex-analytic, then (after a permutation of co-ordinates) $A$ is a product of generalised correspondences and singletons -- this is Lemma~\ref{sm}(b).
By saturation of $(X,\mathcal L_{\gc})$ -- which holds because of Proposition~\ref{sm-ctblecorr} -- one can use automorphisms to show that if $A$ is $\acl(\emptyset)$-definable in $(X,\mathcal L_{\gc})$, then the singletons that appear must come from $\acl(\emptyset)\cap X$.
Hence there are only finitely many $\acl(\emptyset)$-definable irreducible complex-analytic subsets of $X^n$ in $(X,\mathcal L_{\gc})$, for all $n>0$.
Exactly as in Theorem~\ref{scsm-finiteaut}, this implies $\aleph_0$-categoricity.

(ii)$\implies$(iii).
This is because $\aleph_0$-categoricity implies triviality, and then we know that $(X,\mathcal L_{\douady})$ is inter-$0$-definable with $(X,\mathcal{L}_{\gc})$ by Proposition~\ref{sm-ctblecorr}.

(iii)$\implies$(iv). Clear.

(iv)$\implies$(i).
Let $\mathcal L$ be a full countable language for $X$ such that $(X,\mathcal L)$ is $\aleph_0$-categorical.
Again we conclude that $X$ has trivial geometry and hence every finite-to-finite correspondence is $\acl(\emptyset)$-definable in $(X,\mathcal L)$.
So, by $\aleph_0$-categoricity, $\corr X$ is finite.
\end{proof}

In conclusion let us discuss some possible extensions and generalisations.
First of all, we expect the equivalence of~(iii) and ~(iv) -- that is, the robustness of $\aleph_0$-categoricity -- to hold for arbitrary compact complex varieties, and not just for smooth strongly minimal ones.
In fact, the right setting in which to investigate $\aleph_0$-categoricity in bimeromorphic geometry seems to be what we might call {\em meromorphic varieties}: Zariski open subsets of compact complex varieties.
On the other hand, it also makes sense to leave the complex-analytic context altogether and ask whether~(i), (ii), and~(iv) are equivalent for strongly minimal {\em complete pre-smooth Zariski-type structures} in the sense of Zilber (see~\cite{zilberbook}).
In fact, much of what we have done here works in that setting; the only parts of our argument that do not immediately extend are Lemmas~\ref{mto1} and~\ref{composedgencorr} where we make essential use of the existence of normalisations and the purity of branch theorem.



\begin{thebibliography}{1}

\bibitem{macristellari}
E.~Macr{\`{\i}} and P.~Stellari.
\newblock Automorphisms and autoequivalences of generic analytic {$K3$}
  surfaces.
\newblock {\em Journal of Geometry and Physics}, 58(1):133--164, 2008.

\bibitem{sat}
R.~Moosa.
\newblock On saturation and the model theory of compact {K}\"ahler manifolds.
\newblock {\em Journal f\"ur die reine und angewandte Mathematik}, 586:1--20,
  2005.
  
\bibitem{amsmoosa}
 R. ~Moosa.
 \newblock  Model theory and complex geometry.
 \newblock  {\em Notices of the American Mathematical Society}, 57(2): 230--235, 2010.

\bibitem{moosamorarutoma}
R.~Moosa, R.~Moraru, and M.~Toma.
\newblock An essentially saturated surface not of {K}\"ahler-type.
\newblock {\em Bulletin of the London Mathematical Society}, 40(5):845--854,
  2008.
  
\bibitem{moosapillay}
R. ~Moosa and A. ~Pillay.
\newblock Model theory and K\"ahler geometry.
\newblock  {\em Model Theory with Applications to Algebra and Analysis, vol. 1} edited by Z. Chatzidakis,
D. Macpherson, A. Pillay, A. Wilkie.
\newblock  London Mathematical Society Lecture Note Series 349, Cambridge University Press, 2008.


\bibitem{pillaylms}
A. ~Pillay.
\newblock  Model theory and stability theory, with applications in differential algebra
and algebraic geometry.
\newblock  {\em Model Theory with Applications to Algebra and Analysis, vol. 1} edited by Z. Chatzidakis,
D. Macpherson, A. Pillay, A. Wilkie.
\newblock  London Mathematical Society Lecture Note Series 349, Cambridge University Press, 2008.


\bibitem{pillay-torus}
A.~Pillay.
\newblock Definable sets in generic complex tori.
\newblock {\em Annals of Pure and Applied Logic}, 77(1):75--80, 1996.

\bibitem{pillayscanlon2001}
A.~Pillay and T.~Scanlon.
\newblock Compact complex manifolds with the {DOP} and other properties.
\newblock {\em The Journal of Symbolic Logic}, 67(2):737--743, 2002.

\bibitem{pillayscanlon2003}
A.~Pillay and T.~Scanlon.
\newblock Meromorphic groups.
\newblock {\em Transactions of the American Mathematical Society},
  355(10):3843--3859, 2003.

\bibitem{zilberbook}
B.~Zilber.
\newblock {\em Zariski geometries: geometry from the logician's point of view}.
\newblock Number 360 in London Mathematical Society Lecture Note Series.
  Cambridge University Press, 2010.

\end{thebibliography}

\end{document}